\documentclass[11pt,notitlepage]{amsart}
\usepackage{amsmath}
\usepackage{amsthm}
\usepackage{amsfonts}
\usepackage{amssymb}
\usepackage{graphicx}
\usepackage{cite}
\usepackage{hyperref}
\usepackage{subfigure}
\usepackage{graphicx, tikz}
\usepackage{epstopdf}
\usepackage{enumitem} 
\usepackage{float}
\usepackage{array}

\newcolumntype{M}[1]{>{\centering\arraybackslash}m{#1}}

\DeclareGraphicsRule{.tif}{png}{.png}{`convert #1 `dirname #1`/`basename #1 .tif`.png}
\graphicspath{{C:/Users/amill/Pictures/Latex/}}

\newtheorem{theorem}{Theorem}[section]

\newtheorem{lemma}[theorem]{Lemma}

\newtheorem{corollary}[theorem]{Corollary}
\newtheorem{definition}[theorem]{Definition}
\newtheorem{remark}[theorem]{Remark}

\theoremstyle{plain} 
\theoremstyle{definition} \newtheorem*{definition*}{Definition}
\theoremstyle{plain} \newtheorem*{fact*}{Fact}
\theoremstyle{remark} \newtheorem*{remark*}{Remark}
\theoremstyle{definition} \newtheorem*{example*}{Example}
\theoremstyle{definition} 
\theoremstyle{definition} \newtheorem*{notation*}{Notation}

\newcommand{\dx}{\text{ dx}}
\newcommand{\dy}{\text{ dy}}

\newcommand{\RN}{\mathbb{R}}

\newcommand{\ra}{\rightarrow}

\newcommand{\Vh}{V_h}
\newcommand{\vh}{v_h}
\newcommand{\Vhzero}{V_h^0}
\newcommand{\Omegah}{\Omega_h}

\newcommand{\Ghz}{G_h^z}

\newcommand{\regdeltaz}{\tilde{\delta}^{z}}

\newcommand{\reggreenz}{\widetilde{G}^{z}}
\newcommand{\regdeltax}{\tilde{\delta}^{x}}
\newcommand{\regdeltaxk}{\tilde{\delta}^{x_k}}

\newcommand{\dgreenz}{G_h^{z}}
\newcommand{\Om}{\Omega}
\newcommand{\Omh}{\Omega_h}

\newcommand{\Holder}{H\"{o}lder }
\newcommand{\Poincare}{Poincar\'{e}}
\newcommand{\veccv}{\vec{v}}
\newcommand{\veccvT}{\vec{v}^\top}
\newcommand{\disLaplace}{\Delta_h}
\newcommand{\Tau}{\mathcal{T}}



\begin{document}

\title[Positivity of Discrete Green's Function]
{On the Positivity of the Discrete Green's Function for Unstructured Finite Element Discretizations in Three Dimensions.}

\author[Miller]{A. Miller}
\address{Department of Mathematics,
	Bridgewater State University,
	Bridgewater,
	MA~02325, USA.}
\email{andrew.miller@bridgew.edu}

\begin{abstract}
	The aim of this paper is twofold. First, we prove $L^p$ estimates for a regularized Green's function in three dimensions. We then establish new estimates for the discrete Green's function and obtain some positivity results. In particular, we prove that the discrete Green's functions with singularity in the interior of the domain cannot be bounded uniformly with respect of the mesh parameter $h$. Actually, we show that at the singularity the discrete Green's function is of order $h^{-1}$, which is consistent with the behavior of the continuous Green's function. In addition, we also show that the discrete Green's function is positive and decays exponentially away from the singularity. We also provide numerically persistent negative values of the discrete Green's function on Delaunay meshes which then implies a discrete Harnack inequality cannot be established for unstructured finite element discretizations. 
\end{abstract}

\maketitle


\section{Introduction}\label{sec. Intro}

Let $\Omega\subset \RN^N$, $N=3$, be a convex bounded domain with sufficiently smooth boundary. We consider the following  Laplace equation;
\begin{align} 
	\begin{split} \label{eq: original problem}
		-\Delta u = 0, & \quad \text{ in } \Omega  \\
		u = b, & \quad \text { on } \partial \Omega.
	\end{split}
\end{align}
Here we assume the boundary data $b\in C(\partial\Om)$ and $b\geq 0$.
Harmonic functions in the classical sense have been well studied across mathematics, mathematical physics, and stochastic processes. Many results are established for harmonic functions, including: regularity results, Liouville's Theorem, weak and strong maximum principles, as well as a mean value property for example. An important and well known result for non-negative harmonic functions is the Harnack inequality\cite{MR1510427}. The theorem essentially states that any two values of a non-negative harmonic function are comparable with a constant independent of the particular function itself. The Harnack inequality has been useful in the analysis of fully non-linear elliptic problems. This problem was then extended by Moser in 1961 \cite{MR159138} for uniformly elliptic equations in divergence form with bounded measurable coefficients under the assumption that the eigenvalues of the matrix operator are bounded from above and below. Later in 1980, Krylov and Safonov extended the result to elliptic equations in non-divergence form with bounded measurable coefficients \cite{MR563790}. In addition there is literature on the Harnack inequality outside of the Euclidean setting on $\RN^N$ including, probability \cite{MR1127149}, graph theory \cite{MR3104559}, and infinite dimensional operators \cite{MR2984082}. 

The same cannot be said about the discrete setting, especially when the discretization is less structured. To the author's knowledge there are only two existing papers on the Harnack inequality in the finite element literature. The first is a paper by Aguilera and Caffarelli\cite{MR945780} where they adapted the continuous De Giorgi-Nash-Moser iteration technique. However, this technique required the discrete maximum principle as well as other additional geometric constraints on the discretization. In 2014, Leykekhman and Pruitt established a discrete Harnack inequality \cite{MR3614014} on general quasi-uniform meshes in two dimensions only requiring a mesh condition near the boundary of the domain. Their technique involved using a discrete Green's representation for discrete harmonic functions, upper and lower bounds on both the continuous Green's function and discrete Green's functions, as well as applying analytical tools from Sobolev space theory and finite element theory.  


Motivated by the Harnack inequality, in this paper we will work towards establishing criteria for the non-negativity of the discrete Green's function including our main result, showing that the discrete Green's function cannot be uniformly bounded at the singularity. We will also prove pointwise upper bounds and establish some $L^p$ estimates for the discrete Green's function. We will adapt techniques used by Leykekhman and Pruitt in two dimensions, however, some two dimensional techniques do not hold in three dimensions. In this case we introduce new analytical techniques while working to keep the discretization of the original space as general as possible. For example, requiring the stiffness matrix to be a $M$-matrix would give positivity of the discrete Green's function, but with significant constraints on the triangulation that many mesh refinement schemes don't follow.

The rest of the paper is organized as follows. In section \ref{sec. prelims} we  introduce the problem as well as any relative definitions and preliminaries from finite elements and Green's functions. Section \ref{reggreenestimates} will concern estimates for the regularized Green's function which have been an important tool for optimal control problems. In section \ref{sec. dgreen results} we present some new estimates for the discrete Green's function including an interesting result that shows the function decays exponentially towards the boundary, analogous to the behavior of the classical Green's function. Section \ref{sec. dgreen positivity} will present our results on the positivity of the discrete Green's function including a new analytical technique used at the ``singularity".  Finally, in \ref{numerical} we will also establish numerically persistent negative values for the discrete Green's function on a polyhedral domain, showing an inconsistency in the finite element approximation of the classical Green's function, suggesting a discrete Harnack inequality cannot be established for unstructured meshes. 




\section{Preliminaries}\label{sec. prelims}
Throughout the paper we make use of standard Sobolev and finite element notation. Let $0<h<1$ and $\{\mathcal{T}_h\}$ be quasi-uniform shape regular family of triangulations of size $h$ for the computational domain $\Omh\subset\Om$. We define $V_h(\Omh)$ (resp. $V_h^0(\Omh)$ for zero boundary condition) the set of all continuous functions on $\Omh$ that are linear (affine) when restricted to each tetrahedra in $\Tau$. Let $\{\phi_i\}_{i=1}^{n+m}$ denote the standard nodal basis function for $\Vh(\Omh)$ and let $x_i$ for $i\in\{1,...,n\}$ denote interior nodes and $x_j$ for $j\in\{n+1,...,n+m\}$ denote boundary nodes (those that lie on the boundary).

We then define $u_h\in\Vh$ to be the solution of the problem,
\begin{align}\label{eq: discreteformulation}
	\begin{split}(\nabla u_h,\nabla v_h)_{\Omega_h}=0,&\quad\forall v_h\in\Vhzero(\Omegah),\\
		u_h=I_hb,&\quad \text{on}\;\;\partial\Omega_h.\end{split}
\end{align}
Where $I_hb$ is an interpolant of the form
	\begin{equation*}
		I_hb=\sum_{k=n+1}^{n+m}b_k\phi_k.
	\end{equation*}
\begin{definition}\label{def: discrete harmonic}
	A function $u_h$ satisfying \eqref{eq: discreteformulation} is said to be \textit{discrete harmonic} on $\Omh$.
\end{definition}

We then have the \textit{classical version of the Green's function} for problem \eqref{eq: original problem}. 
\begin{definition}
		Define $G^z(x)$ to be the solution to the following problem;
	\begin{align} 
		\begin{split} \label{eq: greens problem}
			-\Delta G^z(x) = \delta^z(x), & \quad \text{ in } \Omega  \\
			G^z(x) = 0, & \quad \text { on } \partial \Omega.
		\end{split}
	\end{align}
	Here $z\in\Om$ is a fixed point and $\delta^z$ is the delta distribution.
\end{definition}
It is well known that a solution $u$ to Laplace's problem can be represented by  Green's function representation. We will then require upper and lower bounds on the Green's function. The proof of the following result for general second order elliptic equations can be found in \cite{MR0244611}.
\begin{lemma}                          \label{lemma: Green}
	Let $G^z(x)$ denote the Green's function of the Laplace equation on  $\Om\subset \mathbb{R}^N$,  $N\geq 3$. Then the following estimates hold,
	\begin{subequations}         \label{eq: Greens}
		\begin{align}
			|G^z(x)| &\le 
			\begin{array}{ll}
				C |x-z|^{2-N}, & \text{}
			\end{array}
			\label{eq: estimates for Greens}\\
			|\nabla^\alpha_x \nabla^\beta_z G^z(x)| &\le {C|x-z|^{2-N-|\alpha|-|\beta|}}, \quad |\alpha|+|\beta|\geq 1. \label{eq: estimates for derivative of Greens}
		\end{align}
	\end{subequations}
\end{lemma}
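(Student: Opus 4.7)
The plan is to use the standard decomposition into the fundamental solution plus a regular part, and then extract the pointwise estimate \eqref{eq: estimates for Greens} from the maximum principle and \eqref{eq: estimates for derivative of Greens} from interior/boundary estimates for harmonic functions.

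First I would write $G^z(x)=\Phi(x-z)-h^z(x)$, where $\Phi(y)=c_N|y|^{2-N}$ is the fundamental solution of $-\Delta$ on $\mathbb{R}^N$ and $h^z$ is the unique harmonic function on $\Omega$ with $h^z|_{\partial\Omega}=\Phi(\cdot-z)|_{\partial\Omega}$. Since $\Phi(\cdot-z)\ge 0$ on $\partial\Omega$, the maximum principle gives $h^z\ge 0$ in $\Omega$, and since $G^z$ is the Green's function one has $G^z\ge 0$ in $\Omega$. Combining these two facts yields $0\le G^z(x)\le \Phi(x-z)\le C|x-z|^{2-N}$, which is exactly \eqref{eq: estimates for Greens}. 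This step requires only the weak maximum principle and the explicit form of $\Phi$.

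Next I would obtain \eqref{eq: estimates for derivative of Greens} by exploiting that $G^z$ is harmonic on $\Omega\setminus\{z\}$ and vanishes on $\partial\Omega$. Fix $x\neq z$ and set $r=\tfrac{1}{4}|x-z|$. On $B_r(x)\cap\Omega$ every point $y$ satisfies $|y-z|\ge \tfrac{3}{4}|x-z|$, so by \eqref{eq: estimates for Greens} already established, $\sup_{B_r(x)\cap\Omega}|G^z|\le C|x-z|^{2-N}$. If $B_r(x)\subset\Omega$, standard interior Cauchy-type estimates for harmonic functions give $|\nabla_x^\alpha G^z(x)|\le C_\alpha r^{-|\alpha|}\sup_{B_r(x)}|G^z|\le C|x-z|^{2-N-|\alpha|}$. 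If $B_r(x)$ meets $\partial\Omega$, the same bound is obtained by Schauder boundary estimates applied to the harmonic function $G^z$, using that $G^z=0$ on $\partial\Omega$ and that $\partial\Omega$ is assumed sufficiently smooth; the estimate is uniform in $z$ because the right-hand side of the sup norm bound depends only on $|x-z|$.

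For the $z$-derivatives and mixed derivatives, I would invoke the symmetry $G^z(x)=G^x(z)$ valid for the Green's function of a self-adjoint operator with symmetric domain, which reduces $\nabla_z^\beta$-estimates to $\nabla^\beta$-estimates in the first variable of $G^{\cdot}(\cdot)$ and hence to the case already handled. For a mixed derivative $\nabla_x^\alpha\nabla_z^\beta$, I would first apply the $\nabla_z^\beta$-bound uniformly on a ball of radius $\tfrac{1}{4}|x-z|$ around $x$ (obtained by repeating the argument above with $z$ as the active variable), and then differentiate in $x$ inside this ball using the same Cauchy-type harmonic estimates, picking up an additional factor of $|x-z|^{-|\alpha|}$. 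The main technical obstacle is the case where the ball $B_r(x)$ intersects $\partial\Omega$: there one must be sure that the constant in the boundary Schauder estimate does not degenerate, which is why the smoothness hypothesis on $\partial\Omega$ (and the uniform bound $\sup_{B_r(x)\cap\Omega}|G^z|\le C|x-z|^{2-N}$) is crucial. Since this is the classical elliptic Green's function result and the reference \cite{MR0244611} treats the general second-order case, I would refer to it for the routine details.
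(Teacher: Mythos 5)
Your argument is essentially correct, but note that the paper does not prove this lemma at all: it simply cites \cite{MR0244611}, where the estimates are established for general second-order elliptic operators. What you have written is therefore a genuinely different (and more self-contained) route, valid specifically for the Laplacian on a smooth domain. The decomposition $G^z=\Phi(\cdot-z)-h^z$ together with the maximum principle cleanly gives \eqref{eq: estimates for Greens}, and the rescaled interior Cauchy estimates on $B_{r}(x)$ with $r=\tfrac14|x-z|$, combined with boundary Schauder estimates when $B_r(x)$ meets $\partial\Omega$, give \eqref{eq: estimates for derivative of Greens}; the symmetry $G^z(x)=G^x(z)$ handles the $z$- and mixed derivatives, using that $\nabla_z^\beta G^z(\cdot)$ is still harmonic away from $z$ and vanishes on $\partial\Omega$. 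Two points deserve to be made explicit if you write this up: (i) the uniformity of the boundary Schauder constant under the rescaling $y\mapsto (y-x)/r$, which holds because the rescaled boundary flattens as $r\to0$ and only finitely many large scales remain, and which is where the ``sufficiently smooth boundary'' hypothesis enters; and (ii) for $y\in B_r(x)$ one has $|y-z|\ge\tfrac34|x-z|$, and since the exponent $2-N-|\beta|$ is negative this converts $\sup_{y\in B_r(x)\cap\Omega}|\nabla_z^\beta G^z(y)|$ into $C|x-z|^{2-N-|\beta|}$ as required before differentiating in $x$. What your approach buys is transparency and elementary tools (maximum principle plus Cauchy/Schauder estimates); what the cited reference buys is generality, since for operators with merely bounded measurable coefficients there is no fundamental-solution decomposition and no Cauchy estimate, and entirely different techniques are needed even for \eqref{eq: estimates for Greens}.
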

There also exists a lower bound for the Green's function which can be found in section 7 of \cite{MR161019}.
\begin{lemma}\label{lemma: lower bound greens}
	Let $G^z(x)$ denote the Green's function of the Laplace equation on  $\Om\subset \mathbb{R}^N$, $N\geq 3$. Then the following estimate holds for some $C>0$,
	\begin{equation}\label{eq: lower bound greens}
		C|x-z|^{2-N}\leq G^z(x).
	\end{equation}
\end{lemma}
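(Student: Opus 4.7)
The plan is to decompose the Green's function into its Newtonian singular part and a bounded harmonic corrector, and then exploit the size of the singularity against the corrector to extract the lower bound.

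First I would set $\Phi(y) = \tfrac{1}{(N-2)\omega_{N-1}}|y|^{2-N}$, the fundamental solution of $-\Delta$ in $\mathbb{R}^N$, and write $G^z(x) = \Phi(x-z) - h^z(x)$, where $h^z$ is the unique harmonic function on $\Omega$ whose boundary trace on $\partial\Omega$ equals that of $\Phi(\cdot - z)$. Since $z$ lies in the interior of $\Omega$, the boundary data is continuous and uniformly bounded, so by the classical maximum principle $\|h^z\|_{L^\infty(\Omega)} \leq M$ for some constant $M$ depending only on $\Omega$ and $\operatorname{dist}(z,\partial\Omega)$.

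Next, for $x$ close to $z$, I choose $r_0>0$ such that $\Phi(r_0) \geq 2M$ (possible since $\Phi(r) \to \infty$ as $r\to 0^+$). Then for $|x-z|\leq r_0$,
$$G^z(x) = \Phi(x-z) - h^z(x) \;\geq\; \Phi(x-z) - M \;\geq\; \tfrac{1}{2}\Phi(x-z) \;=\; \tfrac{1}{2(N-2)\omega_{N-1}}|x-z|^{2-N},$$
which already supplies the desired lower bound in the near regime. For $x$ outside this small ball but still on a compact subset $K\Subset\Omega$ separated from $\partial\Omega$, the strong maximum principle gives $G^z > 0$ on $K\setminus\{z\}$, and since $|x-z|^{2-N}$ is bounded above on $K$, a compactness argument yields a (possibly smaller) constant $C$ of the same form.

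The main obstacle is the patching: near $\partial\Omega$ the Green's function vanishes while $|x-z|^{2-N}$ stays bounded, so the inequality cannot hold with a single constant uniformly up to the boundary. This means the lemma is naturally read as a local-to-$z$ statement, with the constant $C$ allowed to depend on $\operatorname{dist}(z,\partial\Omega)$ and on $\mathrm{diam}(\Omega)$; this is precisely the regime needed in the remainder of the paper, where $z$ is an interior singularity and the bound is used to compare $G^z$ with its regularization. Once this dependence is made explicit, the argument reduces cleanly to the two ingredients above, namely the maximum principle for $h^z$ and the strict positivity of $G^z$ away from $\partial\Omega$.
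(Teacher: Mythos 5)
The paper does not prove this lemma itself: it cites Section~7 of Littman--Stampacchia--Weinberger \cite{MR161019}, where two-sided bounds of the form $G(x,y)\sim |x-y|^{2-N}$ are established with constants depending on the distance of the points to $\partial\Omega$. Your argument --- write $G^z=\Phi(\cdot-z)-h^z$, bound the harmonic corrector $h^z$ by the maximum principle (with $M$ scaling like $\operatorname{dist}(z,\partial\Omega)^{2-N}$), absorb the corrector where $\Phi\geq 2M$, and handle the remaining compact set by positivity and continuity of $G^z$ --- is the standard self-contained proof of this fact and is correct as written; note that your $r_0$ satisfies $r_0 = 2^{-1/(N-2)}\operatorname{dist}(z,\partial\Omega)$, so $B_{r_0}(z)\subset\Omega$ automatically. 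The caveat you raise at the end is substantive and not a defect in your proof: since $G^z$ vanishes on $\partial\Omega$ while $|x-z|^{2-N}$ stays bounded below for interior $z$, the inequality cannot hold with a single constant over all of $\Omega$, and the lemma must be read either locally near $z$ or on compact subsets with $C$ depending on $\operatorname{dist}(z,\partial\Omega)$ --- which is also the form in which the cited LSW result is given. The paper leaves this dependence implicit, and in fact applies the bound (in the proof of Theorem~\ref{lemma: Ghz positive away}) at points $y$ in an element $\tau_x$ whose anchor $x$ is allowed to approach $\partial\Omega$, so the restriction you identify is load-bearing rather than cosmetic.
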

We note that when $N=3$ we have the following estimates on $G^z(x)$,
\begin{equation}\label{eq: Green estiamtes}
	C_1|x-z|^{-1}\leq G^z(x)\leq C_2|x-z|^{-1}.
\end{equation}
The \textit{discrete Green's function}, $\Ghz(x)$, is simply the finite element approximation to $G^z(x)$ and can be defined in the following way.
\begin{definition}\label{def: discretegreen}
	The \textit{discrete Green's function} with singularity at $z$ is the function $\Ghz(x)\in\Vhzero(\Omh)$ satisfying
	\begin{equation}\label{eq: discretegreen}
		(\nabla\Ghz,\nabla v_h)_{\Om}=v_h(z),\;\;\;\forall v_h\in\Vhzero(\Omh).
	\end{equation}
\end{definition}
From this definition it is clear to see that $\Ghz(x)$ is also symmetric by substituting $\vh$ with $G_h^x(z)$ as is $G^z(x)$.

The Ritz projection of a function $u\in H^1_0(\Om)$ onto $\Vhzero(\Omh)$ defined by
\begin{equation}\label{def: Ritzprojection}
	(\nabla R_hu,\nabla v_h)_{\Omh} =(\nabla u,\nabla v_h)_{\Omh},\quad\forall v_h\in\Vhzero(\Omh).
\end{equation}
We will also make use of the discrete Laplace operator.
\begin{definition}\label{def: discrete Laplace}
	The discrete Laplace operator, $\disLaplace:\Vhzero(\Omh)\rightarrow\Vhzero(\Omh)$, is defined by
	\begin{equation}\label{eq: discrete Laplace}
		(-\disLaplace v_h,\chi)_{\Om}=(\nabla v_h,\nabla\chi)_{\Om},\quad\forall\chi\in\Vhzero(\Omh).
	\end{equation}
\end{definition}

In the analysis we will also need a \textit{regularized Green's function}. To define this we first need a \textit{regularized delta function}.
\begin{definition}\label{def: regdelta}
	Let $\regdeltaz(x)\geq 0$ denote a regularized delta function supported in an element $\tau_0$ containing $z$ defined by,
	\begin{equation}\label{eq: regdelta}
		(\regdeltaz,v_h)_{\Om}=(\regdeltaz,v_h)_{\tau_0}=v_h(z),\;\;\;\forall v_h\in\Vh(\Omh).
	\end{equation}
\end{definition}
For an explicit construction of $\regdeltaz$ see \cite{MR3734696}.
In addition we also have the following estimates for $\regdeltaz$.	There exists a constant $C$ independent of $z$ such that,
	\begin{equation}\label{eq: regdelta estimates}
		\|\regdeltaz\|_{W^s_p(\tau_0)}\leq Ch^{-s-N(1-\frac{1}{p})},\;\;\;1\leq p\leq\infty,\;\;\;s=0,1.
	\end{equation}

Therefore we get in particular; $\|\regdeltaz\|_{L^1(\Om)}\leq C$, $\|\regdeltaz\|_{L^2(\Om)}\leq Ch^{-\frac{3}{2}}$, and $\|\regdeltaz\|_{L^\infty(\Om)}\leq Ch^{-3}$. 
\begin{remark}\label{rmk: regdelta is 1}
	Due to the construction of $\regdeltaz$ we have $\regdeltaz>0$ which implies $\int_{\tau_0}\regdeltaz\dx=1$. We will use this observation periodically throughout the analysis. See \cite{MR3734696}.
\end{remark}
Additionally,  defining the $L^2$ \textit{projection} of a function $u\in L^2(\Om)$ onto $\Vhzero(\Omh)$ by,
\begin{equation}\label{eq: L2 projection}
	(P_hu,\vh)_\Om=(u,\vh)_\Om,\quad\forall\vh\in\Vhzero(\Omh).
\end{equation}
Then it has been shown that the $L^2$ projection of $\regdeltaz$ has exponential decay away from $z$. The following is Lemma 7.2 in \cite{MR1115238}.
\begin{lemma}\label{lemma: L2 proj regdelta}
	Given a quasi-uniform triangulation $\Omh$ there exists  constants $C$ and $c$ such that
	\begin{equation}\label{eq: L2 proj regdelta}
		|P_h\regdeltaz(x)|\leq Ch^{-N}e^{-\frac{c|x-z|}{h}}.
	\end{equation}
\end{lemma}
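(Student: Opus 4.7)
The plan is to adapt a standard Schatz--Wahlbin--Thom\'ee weighted-duality argument. Write $w_h := P_h\widetilde{\delta}^z \in V_h^0(\Omega_h)$, so that by definition $(w_h,v_h) = v_h(z)$ for every $v_h \in V_h^0(\Omega_h)$; in particular, $(w_h,v_h) = 0$ whenever $v_h$ vanishes on the element $\tau_0$ containing $z$. Combined with the crude bound $\|w_h\|_{L^2(\Omega)} \le \|\widetilde{\delta}^z\|_{L^2} \le Ch^{-N/2}$, which follows from $L^2$-stability of $P_h$ and the regularized-delta estimates \eqref{eq: regdelta estimates}, these are the only two facts about $w_h$ that the proof will use.

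The strategy is to first establish a weighted $L^2$ estimate of the form $\|e^{c|x-z|/h} w_h\|_{L^2(\Omega)} \le Ch^{-N/2}$ for a sufficiently small constant $c > 0$. Once this is in hand, the pointwise bound follows from a finite element inverse inequality: for any element $\tau \subset \Omega_h$ and any $x \in \tau$, one has $|w_h(x)| \le Ch^{-N/2}\|w_h\|_{L^2(\tau)} \le Ch^{-N}e^{-c\,\mathrm{dist}(\tau,z)/h}$, which yields \eqref{eq: L2 proj regdelta} after a harmless adjustment of the exponent.

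To derive the weighted bound, I would decompose $\Omega_h$ into dyadic shells $A_j = \{x : 2^{j-1}h \le |x-z| \le 2^{j}h\}$ and select smooth cutoffs $\chi_j$ supported in $A_{j-1}\cup A_j\cup A_{j+1}$ satisfying $\|\nabla^k\chi_j\|_{L^\infty} \le Cd_j^{-k}$ with $d_j := 2^j h$. For $j$ large enough that $\chi_j$ vanishes on $\tau_0$, testing the orthogonality relation against the nodal interpolant $I_h(\chi_j^2 w_h)\in V_h^0(\Omega_h)$ gives $(w_h,I_h(\chi_j^2 w_h)) = 0$, so $\|\chi_j w_h\|_{L^2}^2 = (w_h,\chi_j^2 w_h - I_h(\chi_j^2 w_h))$. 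The standard super-approximation estimate bounds this by a constant multiple of $(h/d_j)\|w_h\|_{L^2(A_{j-1}\cup A_j\cup A_{j+1})}^2$, which absorbs the shell $A_j$ into a strict fraction of the contribution from the neighbouring shells. Iterating from the outermost shell inward, with the crude global $L^2$ bound supplying the base case, produces geometric decay in $j$, which translates into the claimed exponential decay in $|x-z|/h$.

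The hard part is making the super-approximation step interact correctly with the growing weight: the constant from the super-approximation inequality, the factor $(h/d_j)$, and the geometric number of shells must balance so that a $\|\chi_j w_h\|_{L^2}^2$ term can be moved to the left-hand side with a strictly smaller coefficient on the right. This balance is exactly what fixes the admissible size of $c$ in the final estimate. Separate care is needed for the innermost annuli (where $\chi_j$ may overlap $\tau_0$ and orthogonality is not available), but on this bounded collection of shells one can bound $w_h$ directly by the global $L^2$ estimate, which is harmless since $e^{-c|x-z|/h}$ is bounded away from zero there.
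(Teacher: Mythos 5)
The paper does not actually prove this lemma --- it quotes it as Lemma 7.2 of the cited reference --- so your argument has to stand on its own. Its overall architecture (a weighted $L^2$ estimate obtained from local orthogonality plus superapproximation, followed by an inverse inequality to pass to the pointwise bound) is the standard and correct one, and your preliminary reductions are fine: $\|P_h\regdeltaz\|_{L^2}\le Ch^{-N/2}$ by $L^2$-stability of $P_h$ together with \eqref{eq: regdelta estimates}, the vanishing of $(P_h\regdeltaz,v_h)$ whenever $v_h$ vanishes on $\tau_0$, and the final $L^2\to L^\infty$ step with the harmless adjustment of $c$.

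The genuine gap is the choice of dyadic shells. On $A_j=\{2^{j-1}h\le|x-z|\le 2^jh\}$ the target bound $e^{-c|x-z|/h}$ is of size $e^{-c2^j}$, i.e.\ \emph{doubly} exponential in $j$, whereas your iteration produces at the very best $\prod_{k\le j}C(h/d_k)\sim 2^{-j^2/2}$, and the ``geometric decay in $j$'' you invoke is merely $\gamma^j=(|x-z|/h)^{-\log_2(1/\gamma)}$ --- polynomial, not exponential, in $|x-z|/h$. So the translation in your final sentence is false: dyadic shells are the right tool for polynomial or logarithmic weighted estimates (as in Lemma \ref{lemma L1 grad reggreen}), not for exponential decay. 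The fix is to take annuli of \emph{fixed width} $Mh$, say $A_j=\{jMh\le|x-z|\le(j+1)Mh\}$, with cutoffs satisfying $\|\nabla\chi_j\|_{L^\infty}\le C(Mh)^{-1}$. Superapproximation then yields a per-annulus gain of $C/M$, a fixed constant that can be made smaller than any prescribed fraction by choosing $M$ large (depending only on shape regularity); since the number of annuli between $z$ and $x$ is now proportional to $|x-z|/(Mh)$ rather than $\log_2(|x-z|/h)$, compounding the gain gives $(C/M)^{c|x-z|/(Mh)}=e^{-c'|x-z|/h}$ as required. The remainder of your argument --- handling the bounded collection of innermost annuli by the crude global bound, and the inverse estimate at the end --- then goes through unchanged.
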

Using $\regdeltaz$, we are able to define a regularized Green's function.
\begin{definition}
	The regularized Green's function, $\reggreenz(x)$, is defined by
	\begin{align}\begin{split}\label{eq: reggreen def}
			-\Delta\reggreenz&=\regdeltaz,\quad\text{in }\Om\\
			\reggreenz&=0,\quad\text{on }\partial\Om.
		\end{split}
	\end{align}
\end{definition}
Integration by parts gives us an alternative definition for $\reggreenz$.
\begin{equation}\label{eq: variational reggreen}
	(\nabla\reggreenz,\nabla\vh)_{\Om}=(\regdeltaz,\vh)_{\Om}=v_h(z),\quad\forall\vh\in\Vhzero.
\end{equation}
We note that $\Ghz=R_h\reggreenz=R_hG^z$. This implies that $\Ghz(x)$ is the finite element approximation of both $G^z(x)$ and $\reggreenz(x)$.

\section{Estimates for the Regularized Green's Function}\label{reggreenestimates}
In this section we shall present some results concerning the regularized Green's function. It is important to note that these results are valid for $N=3$ only. Some of the results are used in the analysis of the discrete Green's function while others are of independent interest.

Our first lemma shows that $\reggreenz$ cannot be uniformly bounded in $h$. 
\begin{lemma}\label{lemma: L2 inner reggreen redelta below}
	There exists a constant $C$ independent of $h$ and $z$ such that
	\begin{equation}\label{eq: L2 inner reggreen redelta below}
		(\reggreenz,\regdeltaz)_{\Om}\geq Ch^{-1}.
	\end{equation}
\end{lemma}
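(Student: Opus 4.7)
The plan is to use the Green's function representation formula for $\reggreenz$, the pointwise lower bound on the continuous Green's function from Lemma \ref{lemma: lower bound greens}, and the normalization $\int_{\tau_0}\regdeltaz\dx = 1$ from Remark \ref{rmk: regdelta is 1}.

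First, since $\reggreenz$ solves $-\Delta \reggreenz = \regdeltaz$ in $\Om$ with zero Dirichlet boundary data, the classical Green's function representation gives
\begin{equation*}
    \reggreenz(x) = \int_\Om G^x(y)\, \regdeltaz(y)\, dy,
\end{equation*}
where $G^x(y) = G^y(x)$ by symmetry. Substituting this into the $L^2$ inner product yields
\begin{equation*}
    (\reggreenz, \regdeltaz)_\Om = \int_\Om \int_\Om G^x(y)\, \regdeltaz(y)\, \regdeltaz(x)\, dy\, dx.
\end{equation*}

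Next, I would apply the lower bound \eqref{eq: Green estiamtes} (valid for $N=3$), namely $G^x(y) \geq C_1|x-y|^{-1}$, together with the nonnegativity $\regdeltaz \geq 0$, to obtain
\begin{equation*}
    (\reggreenz, \regdeltaz)_\Om \geq C_1 \int_\Om \int_\Om \frac{\regdeltaz(y)\, \regdeltaz(x)}{|x-y|}\, dy\, dx.
\end{equation*}
Since $\regdeltaz$ is supported in the single element $\tau_0$ containing $z$, and $\tau_0$ has diameter bounded by $Ch$ by quasi-uniformity, we have $|x-y| \leq Ch$ for all $x,y \in \tau_0$. Therefore $|x-y|^{-1} \geq C h^{-1}$ on $\tau_0 \times \tau_0$, giving
\begin{equation*}
    (\reggreenz, \regdeltaz)_\Om \geq C h^{-1} \int_{\tau_0} \int_{\tau_0} \regdeltaz(y)\, \regdeltaz(x)\, dy\, dx = C h^{-1} \left(\int_{\tau_0} \regdeltaz\, dx\right)^2 = C h^{-1},
\end{equation*}
where the final equality uses Remark \ref{rmk: regdelta is 1}.

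I do not anticipate a significant obstacle here: the representation formula is standard, and the pointwise lower bound on $G^x(y)$ combined with the compact support of $\regdeltaz$ in an element of diameter $\sim h$ does all the work. The only subtlety worth flagging is that the lower bound in Lemma \ref{lemma: lower bound greens} is interior in nature, so one should note that $\tau_0 \times \tau_0$ lies in the diagonal region where the singular lower bound $C|x-y|^{-1}$ applies uniformly; since both points are within distance $Ch$ of $z$ (assumed interior), this is not an issue for $h$ small enough relative to $\mathrm{dist}(z,\partial\Om)$.
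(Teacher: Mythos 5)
Your proposal is correct and follows essentially the same route as the paper: Green's representation for $\reggreenz$, the interior lower bound $G^x(y)\geq C|x-y|^{-1}$, the fact that $\tau_0$ has diameter $O(h)$, and the normalization $\int_{\tau_0}\regdeltaz\,dx=1$. The only cosmetic difference is that you substitute the representation directly into the inner product and bound a double integral, whereas the paper first derives the pointwise bound $\reggreenz(x)\geq Ch^{-1}$ on a ball of radius $ch$ about $z$ and then integrates once against $\regdeltaz$.
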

\begin{proof}
	Let $\tau_0$ be the element such that $\regdeltaz$ is supported in. Then by Green's representation, for $x\in B_z(ch)$, $c>1$ such that $B_z(ch)\subset\Om$, 
	\begin{align*}
		\reggreenz(z)&=\int_{\tau_0}G(z,x)\regdeltaz(x)\dx\\
		&\geq C\int_{\tau_0}|x-z|^{-1}\regdeltaz(x)\dx\\
		&\geq Ch^{-1}\int_{\tau_0}\regdeltaz(x)\dx=Ch^{-1}.
	\end{align*}
	Therefore we get,
	\begin{equation*}
		(\reggreenz,\regdeltaz)_{\Om}=\int_{\tau_0}\reggreenz\regdeltaz\dx\geq Ch^{-1}\int_{\tau_0}\regdeltaz\dx=Ch^{-1}.
	\end{equation*}
	
\end{proof}
We now show an upper bound, independent of $h$, on the $L^2$ inner product of $\regdeltaz$ and $\reggreenz$.
\begin{lemma}\label{lemma: L2 inner reggreen and regdelta}
	There exists a constant $C$ independent of $h$ and $z$ such that
	\begin{equation}\label{eq: L2 inner reggreen and regdelta}
		(\reggreenz,\regdeltaz)_{\Om}\leq Ch^{-1}.
	\end{equation}
\end{lemma}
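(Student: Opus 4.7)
The plan is to reduce the $L^2$ inner product to a bound on $\|\reggreenz\|_{L^\infty(\tau_0)}$ via Hölder's inequality, and then use the Green's function representation of $\reggreenz$ together with the upper bound from Lemma \ref{lemma: Green} to control that $L^\infty$-norm on the element $\tau_0$ containing $z$.

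First, since $\regdeltaz$ is supported in $\tau_0$ and $\int_{\tau_0}\regdeltaz\,dx=1$ (Remark \ref{rmk: regdelta is 1}), I would write
\[
(\reggreenz,\regdeltaz)_{\Om}=\int_{\tau_0}\reggreenz(x)\,\regdeltaz(x)\,dx \;\leq\; \|\reggreenz\|_{L^\infty(\tau_0)}\,\|\regdeltaz\|_{L^1(\Om)} \;\leq\; C\,\|\reggreenz\|_{L^\infty(\tau_0)}.
\]
So the whole game reduces to showing $\|\reggreenz\|_{L^\infty(\tau_0)}\leq Ch^{-1}$, which matches the lower bound in Lemma \ref{lemma: L2 inner reggreen redelta below} (and is expected since the continuous Green's function blows up like $|x-z|^{-1}$).

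Next, for $x\in\tau_0$ I would represent $\reggreenz$ via the continuous Green's function,
\[
\reggreenz(x)=\int_{\Om} G^{x}(y)\,\regdeltaz(y)\,dy=\int_{\tau_0} G^{x}(y)\,\regdeltaz(y)\,dy,
\]
apply the pointwise upper bound $G^{x}(y)\leq C|x-y|^{-1}$ from Lemma \ref{lemma: Green} (using $N=3$), and then use the $L^\infty$ estimate $\|\regdeltaz\|_{L^\infty(\tau_0)}\leq Ch^{-3}$ from \eqref{eq: regdelta estimates}. Since $\tau_0$ is an element of a quasi-uniform shape-regular triangulation, $\mathrm{diam}(\tau_0)\leq Ch$ and in particular $\tau_0\subset B(x,Ch)$ for any $x\in\tau_0$. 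Therefore
\[
\reggreenz(x)\leq C h^{-3}\int_{B(x,Ch)}|x-y|^{-1}\,dy = Ch^{-3}\cdot Ch^{2}=Ch^{-1},
\]
where the middle integral is computed in spherical coordinates in $\mathbb{R}^3$, giving the $h^2$ factor. Combining this with the Hölder bound above yields $(\reggreenz,\regdeltaz)_{\Om}\leq Ch^{-1}$.

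The only genuine step is the pointwise bound $\|\reggreenz\|_{L^\infty(\tau_0)}\leq Ch^{-1}$; everything else is bookkeeping. The main (minor) obstacle is just making sure the integration domain in $\int|x-y|^{-1}\,dy$ is enclosed in a ball of radius $\mathcal{O}(h)$ around $x$, which follows from quasi-uniformity and shape regularity of $\mathcal{T}_h$, so that the $\mathbb{R}^3$ singular integral contributes the factor $h^2$ that exactly cancels two of the three powers of $h^{-1}$ from $\|\regdeltaz\|_{L^\infty}$. No delicate cancellation or sharp constants are needed.
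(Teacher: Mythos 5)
Your proposal is correct and follows essentially the same route as the paper's proof: both use the Green's representation of $\reggreenz$, the bound $G(x,y)\leq C|x-y|^{-1}$, the estimate $\|\regdeltaz\|_{L^\infty(\tau_0)}\leq Ch^{-3}$, and a spherical-coordinates computation of $\int|x-y|^{-1}\,dy$ over an $\mathcal{O}(h)$-ball to obtain the pointwise bound $\reggreenz\leq Ch^{-1}$ near $z$, before concluding with the unit mass of $\regdeltaz$. The only difference is cosmetic ordering: you apply H\"older to the outer integral first and then prove the $L^\infty$ bound, whereas the paper establishes the pointwise bound first.
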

\begin{proof}
	Let $\tau_0$ be the element such that $\regdeltaz$ is supported in. Then by Green's representation we have,
	\begin{align*}
		\reggreenz(y)=\int_{\tau_0}G(y,x)\regdeltaz(x)\dx&\leq C_G\int_{\tau_0}|x-y|^{-1}\regdeltaz(x)\dx\\
		&\leq C\|\regdeltaz\|_{L^\infty(\tau_0)}\cdot\int_{\tau_0}|x-y|^{-1}\dx.\\
		&\leq Ch^{-3}\int_{\tau_0}|x-y|^{-1}\dx\text{ by }\eqref{eq: regdelta estimates}.
	\end{align*}
 Now we shall convert to spherical coordinates with $\rho=|x-y|$ and choose $c>1$ such that $\tau_0\subset B_{ch}(z)$ to get,
	\begin{align*}
	 Ch^{-3}\int_{B_{ch}(z)}|x-y|^{-1}\dx&=Ch^{-3}\int_{0}^{ch}\rho^{-1}\rho^2\;d\rho\\
		&\leq Ch^{-1}.
	\end{align*}
	Therefore $\reggreenz(y)\leq Ch^{-1}$ for $|x-y|\leq ch$. Thus,
	\begin{equation*}
		(\reggreenz,\regdeltaz)_{\Om}=\int_{\tau_0}\reggreenz\regdeltaz\dx\leq Ch^{-1}\underbrace{\int_{\tau_0}\regdeltaz\dx}_{=1}=Ch^{-1}.
	\end{equation*}
\end{proof}
As a result of Lemmas \ref{lemma: L2 inner reggreen redelta below} and \ref{lemma: L2 inner reggreen and regdelta} we get the following corollary.
\begin{corollary}\label{lemma: bounds on reggreen}
	There exist constants $C_1$ and $C_2$ independent of $h$ and $z$ with $x\in B_{ch}(z)$, $c>1$ such that $B_z(ch)\subset\Om$, we have
	\begin{equation}\label{eq: bounds on reggreen}
		C_1h^{-1}\leq \reggreenz(x)\leq C_2h^{-1},
	\end{equation}
	and in particular we see,
	\begin{equation}\label{eq: bounds on reggreen node}
		C_1h^{-1}\leq \reggreenz(z)\leq C_2h^{-1},
	\end{equation}
\end{corollary}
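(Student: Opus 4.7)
The plan is to observe that Corollary 3.3 is essentially just a re-reading of the preceding two lemmas as pointwise bounds rather than as bounds on an inner product, since both proofs in fact established the pointwise estimates en route and then integrated against $\regdeltaz$.

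For the upper bound, I would simply note that the proof of Lemma 3.2 already shows $\reggreenz(y)\le Ch^{-1}$ for every $y$ with $|y-z|\le ch$. Indeed, Green's representation combined with the $L^\infty$ bound on $\regdeltaz$ from \eqref{eq: regdelta estimates} and the spherical-coordinate computation $\int_{B_{Ch}(y)}|x-y|^{-1}\dx \le Ch^2$ yields this bound for any such $y$; only at the very last step does one integrate against $\regdeltaz$ to convert it into the inner-product estimate \eqref{eq: L2 inner reggreen and regdelta}. So the upper bound $\reggreenz(x)\le C_2 h^{-1}$ for $x\in B_{ch}(z)$ can simply be read off from that proof.

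For the lower bound, Lemma 3.1 established $\reggreenz(z)\ge Ch^{-1}$. I would mimic that argument with $x$ in place of $z$. Specifically, for any $x\in B_{ch}(z)$ and any $y\in\tau_0$ we have $|x-y|\le |x-z|+|y-z|\le 2ch$, so by the lower bound \eqref{eq: lower bound greens},
\begin{equation*}
G(x,y)\ge C|x-y|^{-1}\ge C(2ch)^{-1}=C h^{-1}.
\end{equation*}
Plugging this into Green's representation and using Remark 2.6,
\begin{equation*}
\reggreenz(x)=\int_{\tau_0}G(x,y)\regdeltaz(y)\dy \ge Ch^{-1}\int_{\tau_0}\regdeltaz(y)\dy = Ch^{-1},
\end{equation*}
which gives $\reggreenz(x)\ge C_1 h^{-1}$ on the full ball $B_{ch}(z)$. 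The particular case $x=z$ in \eqref{eq: bounds on reggreen node} is then immediate because $z\in B_{ch}(z)$.

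There is no real obstacle here; the only substantive point to check is that the lower-bound argument survives the enlargement of the evaluation point from $z$ to an arbitrary $x\in B_{ch}(z)$, and this is handled by the trivial triangle inequality $|x-y|\le 2ch$. One may need to shrink $c$ slightly (or equivalently absorb factors of $2$) to keep both $\tau_0$ and $B_{ch}(z)$ comfortably inside $\Omega$, but this only affects the constants $C_1, C_2$, not their independence of $h$ and $z$.
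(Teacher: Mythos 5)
Your proposal is correct and matches the paper's intent: the paper gives no separate proof for this corollary, deriving it precisely from the pointwise bounds $\reggreenz(y)\le Ch^{-1}$ and $\reggreenz \ge Ch^{-1}$ on $B_{ch}(z)$ that are established en route in the proofs of Lemmas \ref{lemma: L2 inner reggreen redelta below} and \ref{lemma: L2 inner reggreen and regdelta}, before integrating against $\regdeltaz$. Your explicit triangle-inequality check for the lower bound at a general $x\in B_{ch}(z)$ is the right (and only) detail needed.
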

The next lemma concerns an upper bound for $\nabla\reggreenz$ in the $L^\infty$ norm.
\begin{lemma}\label{lemma: Linfty grad reggreen}
	There exists a constant $C$ independent of $z$ and $h$ such that
	\begin{equation}\label{eq: Linfty grad reggreen}
		\|\nabla\reggreenz\|_{L^\infty(\Om)}\leq Ch^{-2}.
	\end{equation}
\end{lemma}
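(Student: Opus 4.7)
The plan is to start from the Green's representation formula for $\reggreenz$, differentiate inside the integral, and then split into two cases depending on the distance from the evaluation point $y$ to the support $\tau_0$ of $\regdeltaz$.

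First I would write
\begin{equation*}
\reggreenz(y) = \int_{\tau_0} G(y,x)\,\regdeltaz(x)\dx,
\end{equation*}
and differentiate with respect to $y$ to obtain $\nabla\reggreenz(y) = \int_{\tau_0} \nabla_y G(y,x)\,\regdeltaz(x)\dx$. By the derivative estimate \eqref{eq: estimates for derivative of Greens}, $|\nabla_y G(y,x)| \le C|x-y|^{-2}$, so
\begin{equation*}
|\nabla\reggreenz(y)| \le C \int_{\tau_0} |x-y|^{-2}\,\regdeltaz(x)\dx.
\end{equation*}

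Next I would split on the location of $y$. In the \textbf{near-singularity case}, pick $c>1$ with $\tau_0\subset B_{ch}(z)$ and suppose $|y-z|\le 2ch$. Then $\tau_0\subset B_{Ch}(y)$ for some enlarged radius, and I would use the $L^\infty$ bound $\|\regdeltaz\|_{L^\infty(\tau_0)} \le Ch^{-3}$ from \eqref{eq: regdelta estimates} together with spherical coordinates:
\begin{equation*}
|\nabla\reggreenz(y)| \le Ch^{-3}\int_{B_{Ch}(y)} |x-y|^{-2}\dx = Ch^{-3}\int_0^{Ch}\rho^{-2}\rho^2\,d\rho = Ch^{-2}.
\end{equation*}
In the \textbf{far-field case} $|y-z| > 2ch$, for any $x\in\tau_0$ we have $|x-y|\ge |y-z|-|x-z|\ge |y-z|/2$, so pulling out $|y-z|^{-2}$ and using $\int_{\tau_0}\regdeltaz\dx=1$ (Remark \ref{rmk: regdelta is 1}) yields
\begin{equation*}
|\nabla\reggreenz(y)| \le C|y-z|^{-2} \le Ch^{-2}.
\end{equation*}

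Combining the two cases gives the desired global bound. The only real subtlety is the near-singularity estimate, where one must integrate a nearly-singular kernel $|x-y|^{-2}$ against the bumpy function $\regdeltaz$; the key observation is that $|x-y|^{-2}$ is locally integrable in $\RN^3$, and the $L^\infty$ bound on $\regdeltaz$ converts the $h^{-3}$ from \eqref{eq: regdelta estimates} into an $h^{-2}$ after one factor of $h$ is absorbed by the volume integral over a ball of radius $O(h)$. Because the Green's function estimate \eqref{eq: estimates for derivative of Greens} already incorporates the Dirichlet boundary condition, no additional boundary analysis is needed.
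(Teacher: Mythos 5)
Your proposal is correct and follows essentially the same argument as the paper: Green's representation, the derivative bound $|\nabla_y G(y,x)|\le C|x-y|^{-2}$, and a two-case split using the $L^\infty$ bound on $\regdeltaz$ with spherical coordinates near the singularity and the $L^1$ (unit mass) bound in the far field. Your far-field triangle-inequality step $|x-y|\ge|y-z|/2$ is in fact slightly more careful than the paper's corresponding justification.
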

\begin{proof}
	Let $y\in\Omh$. We shall observe two cases.\\
	
	\textbf{Case 1:} Let $c>1$ and $y\in\Omh\backslash B_{ch}(z)$.\\
	We appeal to a Green's function representation;
	\begin{align*}
		|\nabla\reggreenz(y)|&=\bigg|\int_{\tau_0}\nabla_y G(y,x)\regdeltaz(x)\dx\bigg|\\
		&\leq\int_{\tau_0}|\nabla_y G(y,x)||\regdeltaz(x)|\dx\\
		&\leq C\int_{\tau_0}|x-y|^{-2}|\regdeltaz(x)|\dx\\
		&\leq Ch^{-2}\|\regdeltaz\|_{L^1(\Omh)},\;\;\;\text{since }|x-y|>ch,\\
		&\leq Ch^{-2}.
	\end{align*}
	Where the last line follows from estimate \eqref{eq: regdelta estimates}. We shall now proceed to case 2.\\
	
	\textbf{Case 2:} Let $c>1$ and $y\in B_{ch}(z)$.\\
	We again appeal to Green's representation as in case 1 in which we still have,
	\begin{equation*}
		|\nabla\reggreenz(y)|\leq C\int_{\tau_0}|x-y|^{-2}|\regdeltaz(x)|\dx.
	\end{equation*}
	Now we will apply the $L^\infty$ estimate \eqref{eq: regdelta estimates} and the \Holder's inequality to get,
	\begin{equation*}
		\int_{\tau_0}|x-y|^{-2}|\regdeltaz(x)|\dx\leq Ch^{-3}\int_{\tau_0}|x-y|^{-2}\dx.
	\end{equation*}
	We shall now apply spherical coordinates with $|x-y|=\rho$ to get,
	\begin{align*}
		Ch^{-3}\int_{\tau_0}|x-y|^{-2}\dx&\leq Ch^{-3}\int_{B_{ch}(z)}|x-y|^{-2}\dx\\
		&=Ch^{-3}\int_{0}^{ch}\rho^{-2}\rho^2\;d\rho\\
		&\leq Ch^{-3}h=Ch^{-2}.
	\end{align*}
	Since $y$ is arbitrary, the result follows.
	
\end{proof}

%

Our next result concerns the $L^1$ norm of the regularized Green's function which requires a different proof technique. The result may be of independent interest, potentially in showing best approximation results.
\begin{lemma}\label{lemma L1 grad reggreen}
	There exists a constant $C$ independent of $h$ and $z$ such that 
	\begin{equation}\label{eq L1 grad reggreen}
		\|\nabla\reggreenz\|_{L^1(\Omh)}\leq C.
	\end{equation}
\end{lemma}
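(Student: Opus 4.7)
The plan is to use the continuous Green's function representation of $\reggreenz$ together with Fubini's theorem, reducing the $L^1$ estimate to bounding $\int_{\Om}|\nabla_y G(y,x)|\,dy$ uniformly in $x\in\tau_0$.

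First I would start from the Green's representation
\[
\reggreenz(y) = \int_{\tau_0} G(y,x)\regdeltaz(x)\,dx,
\]
and differentiate under the integral sign to obtain $\nabla_y\reggreenz(y) = \int_{\tau_0} \nabla_y G(y,x)\regdeltaz(x)\,dx$, which is justified because $\regdeltaz\in L^{\infty}$ and the singularity $|\nabla_y G|\lesssim|x-y|^{-2}$ is locally integrable in three dimensions. Taking absolute values, integrating over $\Omh$, and swapping the order of integration via Tonelli gives
\[
\|\nabla\reggreenz\|_{L^1(\Omh)} \le \int_{\tau_0} \regdeltaz(x)\left(\int_{\Omh} |\nabla_y G(y,x)|\,dy\right)dx.
\]

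Next I would invoke the gradient bound $|\nabla_y G(y,x)|\le C|x-y|^{-2}$ from Lemma~\ref{lemma: Green} (with $N=3$, $|\alpha|=1$, $|\beta|=0$). Passing to spherical coordinates centered at $x$ and using that $\Om$ is bounded,
\[
\int_{\Omh}|x-y|^{-2}\,dy \le \int_{0}^{\mathrm{diam}(\Om)} \rho^{-2}\rho^{2}\,d\rho = \mathrm{diam}(\Om),
\]
which is a constant independent of $x$ and of $h$. Combining this with $\int_{\tau_0}\regdeltaz\,dx = 1$ from Remark~\ref{rmk: regdelta is 1} yields the desired bound $\|\nabla\reggreenz\|_{L^1(\Omh)}\le C$.

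The key observation — and the reason this lemma is genuinely different from the $L^\infty$ estimate of Lemma~\ref{lemma: Linfty grad reggreen} — is that the singular weight $|x-y|^{-2}$ is precisely at the threshold of integrability in three dimensions, so the $L^1$ mass is absorbed by the spherical volume element $\rho^2\,d\rho$ with no factor of $h^{-1}$ left over. This is also why one cannot recover the estimate by simply integrating the pointwise bound $h^{-2}$ over a ball of size $h$: such a crude approach would leave extra negative powers of $h$, whereas Fubini exploits cancellation between the pointwise blow-up near $z$ and the smallness of the support $\tau_0$. The only mild technical point is the differentiation under the integral sign, which follows from the standard dominated convergence argument using the $L^1_{\mathrm{loc}}$ bound on $|\nabla_y G|$.
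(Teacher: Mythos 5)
Your proof is correct, but it takes a genuinely different route from the paper. The paper decomposes $\Omh$ into dyadic annuli $\Om_j=\{d_{j+1}\le|x-z|\le d_j\}$ plus an inner region $\Om_*$ of radius $O(h)$, applies H\"older on each piece ($\|\nabla\reggreenz\|_{L^1(\Om_j)}\le d_j^3\|\nabla\reggreenz\|_{L^\infty(\Om_j)}$), uses the pointwise gradient bound $|\nabla\reggreenz(y)|\le C|y-z|^{-2}$ away from $z$ and the $Ch^{-2}$ bound of Lemma~\ref{lemma: Linfty grad reggreen} near $z$, and sums the geometric series $\sum_j d_j$. You instead push the absolute value inside the Green's representation, apply Tonelli, and reduce everything to the single uniform bound $\int_{\Omh}|x-y|^{-2}\,dy\le C\,\mathrm{diam}(\Om)$, using $\int_{\tau_0}\regdeltaz=1$. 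Your argument is shorter and more elementary (no decomposition, no case analysis near the singularity), and the Fubini step is legitimate since the integrand is nonnegative and $|\nabla_yG|\lesssim|x-y|^{-2}$ is locally integrable in three dimensions, which also justifies differentiating under the integral for an $L^\infty$ compactly supported density. The dyadic approach, on the other hand, makes the contribution of each annulus explicit, which is the form one typically needs for weighted or localized refinements of such estimates. One small inaccuracy in your closing commentary: integrating the pointwise bound $Ch^{-2}$ over a ball of radius $O(h)$ gives $Ch^{-2}\cdot h^3=Ch$, not a leftover negative power of $h$ --- this is exactly the paper's estimate of the inner region $\Om_*$; the crude approach that genuinely fails is integrating $Ch^{-2}$ over all of $\Om$. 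This does not affect the validity of your proof.
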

\begin{proof}
	We shall proceed using a dyadic decomposition of $\Omh$. With out loss of generality, assume $diam(\Omh)\leq 1$. Then let $d_j=2^{-j}$ and
	\begin{equation*}
		\Omh=\Om_*\cup\bigcup\limits_{j=0}^J\Om_j,
	\end{equation*}
	where
	\begin{align*}
		\Om_*&=\{x\in\Omh\mid |x-z|\leq Kh\},\\
		\Om_j&=\{x\in\Omh\mid d_{j+1}\leq|x-z|\leq d_j\},
	\end{align*}
	with $K$ possibly chosen later and $J$ such that $2^{-J}\leq Kh\leq2^{-J+1}$. This then gives
	\begin{equation*}
		\|\nabla\reggreenz\|_{L^1(\Omh)}\leq\underbrace{\sum_{j=0}^{J}\|\nabla\reggreenz\|_{L^1(\Om_j)}}_{(1)}+\underbrace{\|\nabla\reggreenz\|_{L^1(\Om_*)}}_{(2)}.
	\end{equation*}
	We will now estimate $(1)$ and $(2)$ individually. For $(1)$ applying the \Holder's inequality first,
	\begin{align*}
		\|\nabla\reggreenz\|_{L^1(\Om_j)}&\leq d_j^3\|\nabla\reggreenz\|_{L^\infty(\Om_j)}\\
		&\leq d_j^3\int_{\tau_0}C|x-z|^{-2}|\regdeltaz|\dx\\
		&\leq Cd_j^3d_{j+1}^{-2}\int_{\tau_0}|\regdeltaz|\dx\\
		&\leq Cd_j^3d_j^{-2}\leq Cd_j.
	\end{align*}
	Where the second line follows from Lemma \ref{lemma: Linfty grad reggreen}. Therefore, 
	\begin{equation*}
		\sum_{j=0}^{J}\|\nabla\reggreenz\|_{L^1(\Om_j)}\leq \sum_{j=0}^{J}Cd_j\leq C.
	\end{equation*} To estimate $(2)$ we again appeal to the \Holder's inequality and Lemma \ref{lemma: Linfty grad reggreen} to get, 
	\begin{align*}
		\|\nabla\reggreenz\|_{L^1(\Om_*)}&\leq K^3h^3\|\nabla\reggreenz\|_{L^\infty(\Om_*)}\\
		&\leq CK^3h^3h^{-2}\\
		&\leq Ch.
	\end{align*}
	In fact, as $h\ra0$, we see that $\|\nabla\reggreenz\|_{L^1(\Om_*)}\ra0$.
	
\end{proof}




\section{Estimates for the Discrete Green's Function}\label{sec. dgreen results}
In the analysis we will require the following inverse estimate from Theorem (4.5.11) in \cite{MR2373954}, for $\vh\in\Vh$, there exists $C>0$ such that
\begin{equation}\label{eq: inverse L2}
	\|\nabla\vh\|_{L^2(\Om)}\leq Ch^{-1}\|\vh\|_{L^2(\Om)}.
\end{equation}

Using the regularized Green's function we shall show an estimate for $\Ghz(z)$ and then give a resulting corollary. 
\begin{lemma}\label{lemma: dGreen upperbound} 
	There exists a constant $C$ independent of $h$ and $z$ such that
	\begin{equation}\label{eq: dGreen upperbound}
	\Ghz(z)\leq Ch^{-1}.
	\end{equation}
\end{lemma}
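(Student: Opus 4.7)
The plan is to express $G_h^z(z)$ as a squared energy norm and then pass through the regularized Green's function via the best approximation property of the Ritz projection, at which point Lemma \ref{lemma: L2 inner reggreen and regdelta} closes the estimate.

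First I would test the defining relation \eqref{eq: discretegreen} for $G_h^z$ with $v_h = G_h^z$ itself (which is admissible since $G_h^z\in V_h^0(\Omega_h)$). This gives the energy identity
\begin{equation*}
G_h^z(z) = (\nabla G_h^z, \nabla G_h^z)_{\Omega} = \|\nabla G_h^z\|_{L^2(\Omega)}^2,
\end{equation*}
which, as a byproduct, already yields $G_h^z(z) \geq 0$.

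Next I would exploit the identification $G_h^z = R_h \widetilde{G}^z$ noted just after \eqref{eq: variational reggreen}. By the best approximation property of the Ritz projection in the energy seminorm (Galerkin orthogonality applied to $\widetilde{G}^z - R_h\widetilde{G}^z$), we have
\begin{equation*}
\|\nabla G_h^z\|_{L^2(\Omega)} \leq \|\nabla \widetilde{G}^z\|_{L^2(\Omega)}.
\end{equation*}
Testing the variational form \eqref{eq: variational reggreen} (extended to all $v \in H_0^1(\Omega)$ via $-\Delta\widetilde{G}^z = \widetilde{\delta}^z$) with $v = \widetilde{G}^z$ gives
\begin{equation*}
\|\nabla \widetilde{G}^z\|_{L^2(\Omega)}^2 = (\widetilde{\delta}^z, \widetilde{G}^z)_{\Omega},
\end{equation*}
and Lemma \ref{lemma: L2 inner reggreen and regdelta} bounds the right-hand side by $Ch^{-1}$. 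Combining the three displays yields $G_h^z(z) \leq \|\nabla \widetilde{G}^z\|_{L^2(\Omega)}^2 \leq Ch^{-1}$, which is the desired estimate.

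Since each ingredient is already in place in the excerpt, there is no real obstacle here; the only subtlety worth flagging is that the test function chosen in step one must be $G_h^z$ itself (so that the right-hand side of \eqref{eq: discretegreen} collapses to $G_h^z(z)$), and that the Ritz-projection inequality is applied in the $H^1$-seminorm rather than in a pointwise sense. A natural consistency check is that combining the lower bound in Corollary \ref{lemma: bounds on reggreen} together with the already established fact that $G_h^z(z) = \|\nabla G_h^z\|_{L^2}^2$ will eventually allow the matching lower bound $G_h^z(z) \geq Ch^{-1}$ claimed in the introduction, so the present upper bound is sharp.
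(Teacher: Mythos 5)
Your proof is correct and follows essentially the same route as the paper: both arguments reduce $\Ghz(z)=\|\nabla \Ghz\|_{L^2}^2$ to $(\reggreenz,\regdeltaz)_{\Om}$ via Galerkin orthogonality (the paper writes out the Pythagorean identity $\|\nabla \Ghz\|^2=(\reggreenz,\regdeltaz)-\|\nabla(\reggreenz-\Ghz)\|^2$ and discards the nonnegative last term, which is exactly your Ritz-stability inequality) and then invokes Lemma \ref{lemma: L2 inner reggreen and regdelta}. One caveat on your closing aside: the matching lower bound $\Ghz(z)\geq Ch^{-1}$ does not follow so easily from Corollary \ref{lemma: bounds on reggreen} together with the energy identity, since the Ritz projection controls $\|\nabla \Ghz\|$ only from above; the paper's Theorem \ref{lemma: dgreen lowerbound} instead requires spectral bounds on the discrete Laplacian and the mass matrix.
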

\begin{proof}
 Indeed, 
	\begin{align*}
		\Ghz(z)&=(\Ghz,\regdeltaz)_{\Omh}\\
		&=(\nabla\Ghz,\nabla\Ghz)_{\Omh}\\
		&=\|\nabla\reggreenz\|_{L^2(\Omh)}^2-\|\nabla\reggreenz\|_{L^2(\Omh)}^2+\|\nabla\Ghz\|_{L^2(\Omh)}^2\\
		&=(\reggreenz,\regdeltaz)_{\Omh}+\int_{\Omh}(|\nabla\Ghz|^2-|\nabla\reggreenz|^2)\dx\\
		&=(\reggreenz,\regdeltaz)_{\Omh}-\int_{\Omh}(|\nabla\reggreenz|^2-|\nabla\Ghz|^2)\dx\\
		&=(\reggreenz,\regdeltaz)_{\Omh}-(\nabla(\reggreenz-\Ghz),\nabla(\reggreenz+\Ghz))_{\Omh}\\
		&=(\reggreenz,\regdeltaz)_{\Omh}-(\nabla(\reggreenz-\Ghz),\nabla\reggreenz)_{\Omh}\\
		&=(\reggreenz,\regdeltaz)_{\Omh}-(\nabla(\reggreenz-\Ghz),\nabla(\reggreenz-\Ghz))_{\Omh}\\
		&=(\reggreenz,\regdeltaz)_{\Omh}-\|\nabla(\reggreenz-\Ghz)\|^2_{L^2(\Omh)}\\
		&\leq(\reggreenz,\regdeltaz)_{\Omh}=(\reggreenz,\regdeltaz)_{\tau_0}\\
		&\leq Ch^{-1}.
	\end{align*}
	We have used the Galerkin orthogonality and Lemma \ref{lemma: L2 inner reggreen and regdelta}.
\end{proof}

We then have a resulting corollary.
\begin{corollary}\label{corollary: L2 dgreen bound reggreen}
	There exists a constant $C$ such that,
	\begin{equation}\label{eq: L2 dgreen bound reggreen}
		\|\nabla\Ghz\|_{L^2(\Omh)}\leq\|\nabla\reggreenz\|_{L^2(\Omh)}\leq Ch^{-\frac{1}{2}}.
	\end{equation}
\end{corollary}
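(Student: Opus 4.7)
The corollary is essentially a packaging of Galerkin orthogonality together with Lemma \ref{lemma: L2 inner reggreen and regdelta}, so my plan is to prove the two inequalities separately and reuse computations already carried out in the proof of Lemma \ref{lemma: dGreen upperbound}.

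For the first inequality I would exploit the fact, noted just before Section \ref{reggreenestimates}, that $\Ghz = R_h \reggreenz$. The standard Pythagorean identity for the Ritz projection in the energy inner product states
\begin{equation*}
\|\nabla \reggreenz\|_{L^2(\Omh)}^2 = \|\nabla \Ghz\|_{L^2(\Omh)}^2 + \|\nabla(\reggreenz - \Ghz)\|_{L^2(\Omh)}^2,
\end{equation*}
which is precisely the identity that appears in the middle of the chain in the proof of Lemma \ref{lemma: dGreen upperbound} (expand $(\nabla(\reggreenz-\Ghz),\nabla(\reggreenz+\Ghz))_{\Omh}$ using Galerkin orthogonality). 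Dropping the non-negative second term immediately gives $\|\nabla \Ghz\|_{L^2(\Omh)} \leq \|\nabla \reggreenz\|_{L^2(\Omh)}$.

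For the second inequality I would test the variational characterization \eqref{eq: reggreen def} of $\reggreenz$ against $\reggreenz$ itself — this is legitimate because $\reggreenz \in H^1_0(\Om)$. This produces $\|\nabla \reggreenz\|_{L^2(\Om)}^2 = (\regdeltaz, \reggreenz)_\Om = (\regdeltaz, \reggreenz)_{\tau_0}$, where the reduction to $\tau_0$ uses that $\regdeltaz$ is supported there. Applying Lemma \ref{lemma: L2 inner reggreen and regdelta} bounds the right-hand side by $Ch^{-1}$, and taking square roots yields $\|\nabla \reggreenz\|_{L^2(\Om)} \leq C h^{-1/2}$. Since $\Omh \subset \Om$ this transfers to the $L^2(\Omh)$ norm with the same constant.

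There is no real obstacle here — the only subtlety is keeping straight that the energy identity of step one lives in $\Omh$ (where $\Ghz$ is defined and the discrete problem is posed) while the variational identity of step two naturally lives on $\Om$, so one should verify that the supports and domains of integration match up (they do, since $\regdeltaz$ is supported in a single element $\tau_0 \subset \Omh \subset \Om$). Chaining the two inequalities then gives the stated result.
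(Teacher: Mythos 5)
Your proposal is correct and follows essentially the same route as the paper: the first inequality via Galerkin orthogonality (the energy Pythagorean identity for the Ritz projection), and the second by testing the weak form of \eqref{eq: reggreen def} with $\reggreenz$ itself and invoking Lemma \ref{lemma: L2 inner reggreen and regdelta}. Your remark that testing with $\reggreenz\in H^1_0(\Om)$ requires the full weak formulation rather than the discrete identity \eqref{eq: variational reggreen} is a point the paper glosses over, but it changes nothing substantive.
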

\begin{proof}
	This follows from showing, by Galerkin orthogonality,
	\begin{equation*}
		\int_{\Omh}|\nabla\Ghz|^2-|\nabla\reggreenz|^2\dx=-\|\nabla(\reggreenz-\Ghz)\|^2_{L^2(\Omh)}\leq 0.
	\end{equation*}
To show that, 
	\begin{equation}\label{eq: L2 grad reggreen}
	\|\nabla\reggreenz\|_{L^2(\Omh)}\leq Ch^{-\frac{1}{2}}.
\end{equation}
Observe,
\begin{align*}
	\|\nabla\reggreenz\|^2_{L^2(\Omh)}&=\int_{\Omh}\nabla\reggreenz\cdot\nabla\reggreenz\dx\\
	&=\int_{\Omh}\reggreenz\cdot\regdeltaz\dx,\;\;\;\text{by }\eqref{eq: variational reggreen},\\
	&\leq Ch^{-1},\;\;\;\text{by }\eqref{eq: L2 inner reggreen and regdelta}.
\end{align*}
Therefore the result follows.
\end{proof}

Next we shall show the discrete Green's function decays exponentially for values away from the singularity. 

\begin{lemma}\label{lemma: dgreen exponential decay}
	For $x\in\Om$ there exists constants $C$ and $c$ independent of $z$ and $h$ such that
	\begin{equation}\label{eq: dgreen exponential decay}
		\dgreenz(x)\leq Ch^{-5}e^{-\frac{c|x-z|}{h}}.
	\end{equation}
\end{lemma}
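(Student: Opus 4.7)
The plan is to establish pointwise exponential decay of $\Ghz$ via a weighted energy estimate combined with the standard inverse inequality $\|\vh\|_{\Linfty(B)}\leq Ch^{-3/2}\|\vh\|_{L^2(B)}$. This is the classical Nitsche--Schatz--Wahlbin strategy for discrete Green's functions: introduce an exponential weight $\omega(y)=e^{\alpha|y-z|/h}$ with $\alpha>0$ small (or a smoothed bounded surrogate), prove that $\|\omega\,\nabla\Ghz\|_{L^2(\Om)}$ is controlled by a small power of $h$, and then localize to obtain a pointwise bound at $x$. The key inputs are Lemma \ref{lemma: dGreen upperbound}, giving $\Ghz(z)\leq Ch^{-1}$, together with the super-approximation property of the nodal interpolant $I_h$ against the weight $\omega$.

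The central estimate I would prove is $\|\omega\,\nabla\Ghz\|_{L^2(\Om)}^2 \leq Ch^{-1}$. To obtain it, test the defining relation $(\nabla\Ghz,\nabla\vh)_\Om=\vh(z)$ with $\vh:=I_h(\omega^2\Ghz)\in\Vhzero$, which is admissible because $\Ghz$ vanishes on $\partial\Om$. The right-hand side is bounded by $Ch^{-1}$, since $\omega\approx 1$ on the element containing $z$ and the nodal values of $\Ghz$ there are controlled by Lemma \ref{lemma: dGreen upperbound}. On the left, expand $\nabla(\omega^2\Ghz)$ by the product rule and handle the discrepancy $\omega^2\Ghz-I_h(\omega^2\Ghz)$ via super-approximation. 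The principal term $\|\omega\,\nabla\Ghz\|_{L^2}^2$ emerges, together with a cross term $(\omega\,\nabla\Ghz,\,\Ghz\,\nabla\omega)_\Om$ and an interpolation remainder. Using the decisive bound $|\nabla^k\omega|\leq C(\alpha/h)^k\omega$, both auxiliary contributions carry an extra factor of $\alpha$ relative to the principal term (after a \Poincare-type step that converts $h^{-1}\|\omega\,\Ghz\|_{L^2}$ back into $\|\omega\,\nabla\Ghz\|_{L^2}$), and are absorbed by choosing $\alpha$ small.

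Finally, I localize and invoke the inverse inequality. On a small ball $B_x$ of radius $\sim h$ around $x$, the weight satisfies $\omega(y)\geq e^{\alpha(|x-z|-Ch)/h}$, so peeling off the weight yields $\|\Ghz\|_{L^2(B_x)}\leq Ch^{1/2}e^{-c|x-z|/h}$ for some $c>0$ (absorbing the $Ch$ correction into $c$); the needed $L^2$ bound on $\Ghz$ itself comes from a \Poincare\ inequality applied to $\omega\Ghz$, which vanishes on $\partial\Om$. The inverse inequality then yields $\Ghz(x)\leq\|\Ghz\|_{\Linfty(B_x)}\leq Ch^{-3/2}\|\Ghz\|_{L^2(B_x)}\leq Ch^{-1}e^{-c|x-z|/h}$, which is actually stronger than the stated $Ch^{-5}e^{-c|x-z|/h}$; the looser prefactor in the lemma is presumably kept for convenience, since the exponential factor dominates any polynomial in $h^{-1}$ once $|x-z|\gtrsim h\log(1/h)$. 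The main obstacle is the super-approximation step: controlling $\omega^2\Ghz-I_h(\omega^2\Ghz)$ in $H^1$ by an $O(\alpha)$-small multiple of $\omega\,\nabla\Ghz$ (plus the absorbable $h^{-1}\omega\,\Ghz$ term) relies on $\omega$ varying by only a factor $1+O(\alpha)$ across any single element, together with the bound $|\nabla^2\omega|\leq C(\alpha/h)^2\omega$. The payoff is exponential decay at the cost that the constant $c$ is proportional to $\alpha$, small but positive and independent of $h$.
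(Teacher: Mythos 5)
Your route is genuinely different from the paper's: the paper writes $-\disLaplace\Ghz=P_h\regdeltaz$ and tries to transfer the known exponential decay of the $L^2$ projection of the regularized delta (Lemma~\ref{lemma: L2 proj regdelta}) to $\Ghz$, whereas you run a Nitsche--Schatz weighted-norm argument directly on $\Ghz$. The fatal step in your plan is the central estimate $\|\omega\,\nabla\Ghz\|_{L^2(\Om)}^2\leq Ch^{-1}$ with $\omega=e^{\alpha|y-z|/h}$, and the failure occurs precisely at the ``\Poincare-type step'' you mention only in passing. After super-approximation you are left with a contribution of size $(\alpha/h)\|\omega\,\Ghz\|_{L^2}$ times $\|\omega\,\nabla\Ghz\|_{L^2}$; to absorb it you need $\|\omega\,\Ghz\|_{L^2}\leq C_P\|\omega\,\nabla\Ghz\|_{L^2}$ with $C_P$ independent of the weight, but expanding $\nabla(\omega\Ghz)$ shows the weighted \Poincare\ inequality itself produces another $(\alpha/h)\|\omega\,\Ghz\|_{L^2}$ term, so absorption forces $\alpha\lesssim h$. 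With $\alpha\lesssim h$ the weight is $e^{O(|y-z|)}$ and yields no decay at scale $|x-z|/h$; with $\alpha$ of order one nothing is absorbed. This is exactly why the inverse mass matrix ($L^2$ projection) is exponentially localized at scale $h$ --- there the analogous closing step is the inverse inequality $h\|\nabla v_h\|_{L^2}\leq C\|v_h\|_{L^2}$, which costs nothing --- while the inverse stiffness matrix is not: its continuum limit is the nonlocal Green's operator.

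No repair is possible, because the stated inequality is false. For fixed $|x-z|=d>0$ the right-hand side $Ch^{-5}e^{-cd/h}\to 0$ as $h\to 0$, while $\Ghz(x)\to G^z(x)\geq C_1d^{-1}>0$ by \eqref{eq: Green estiamtes} together with the pointwise error estimate \eqref{eq: 6.3 estimate S and W}; Theorem~\ref{lemma: Ghz positive away} of the same paper likewise gives $\Ghz(x)\geq C|x-z|^{-1}+o(1)$ for $|x-z|\geq Kh^{1/2}$. Your claimed sharper conclusion $\Ghz(x)\leq Ch^{-1}e^{-c|x-z|/h}$ is therefore contradicted even more strongly. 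For what it is worth, the paper's own argument is not sound either: the identity $(\regdeltax,\dgreenz)_{\tau_x}=(-\disLaplace\regdeltax,P_h\regdeltaz)_{\tau_x}$ does not follow from the definitions (the legitimate manipulation yields $(G_h^x,P_h\regdeltaz)_{\Om}$, whose leading behavior is $|x-z|^{-1}$, not exponential). The exponential decay genuinely available here is that of $P_h\regdeltaz$, not of $\Ghz$ itself.
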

\begin{proof}
	We shall use the fact that the discrete Laplace operator is bounded as well as estimate \eqref{eq: L2 proj regdelta}. First note that by \eqref{eq: discretegreen}, \eqref{eq: regdelta}, \eqref{eq: L2 projection}, and \eqref{eq: discrete Laplace} we have,
	\begin{equation}\label{eq: dLaplace dGreen ch3}
		-\disLaplace\Ghz=P_h\regdeltaz.
	\end{equation}
	Therefore let $x\in\Om$ be fixed and arbitrary with associated regularized delta function $\regdeltax$, then
	\begin{align*}
		\dgreenz(x)=(\regdeltax,\dgreenz)_{\tau_x}&=(-\Delta_h\regdeltax,P_h\regdeltaz)_{\tau_x}\\
		&\leq C\|\regdeltax\|_{L^1(\tau_x)}\|\Delta_hP_h\regdeltaz\|_{L^\infty(\tau_x)}\\
		&\leq Ch^{-2}\|P_h\regdeltaz\|_{L^\infty(\tau_x)}\\
		&\leq Ch^{-5}e^{-\frac{|x-z|}{h}}.
	\end{align*}
\end{proof}

We first note that this result does not give us insight to the positivity of $\dgreenz(x)$. However, in contrast to Lemma \ref{lemma: dGreen upperbound} we gain a significantly sharper bound on the discrete Green's function for $x$ outside of a ball around the singularity.

Next, we present an upper bound for the discrete Green's function in the $W^1_\infty(\Om)$ norm. 
\begin{lemma}\label{lemma: graddgreen up}
	There exists a constant $C$ independent of $h$ such that
	\begin{equation}\label{eq: graddgreen up}
		\|\dgreenz\|_{W^1_\infty(\Om)}\leq Ch^{-2}.
	\end{equation}
\end{lemma}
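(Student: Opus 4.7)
The plan is to first establish a uniform $L^\infty$ bound $\|\Ghz\|_{L^\infty(\Om)}\leq Ch^{-1}$ and then upgrade this to the gradient bound by invoking a standard local inverse estimate on each simplex.

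To obtain the $L^\infty$ bound I would exploit the symmetry $\Ghz(x)=G_h^x(z)$, which follows directly from \eqref{eq: discretegreen} by testing the equation for $G_h^x$ against $\Ghz$ and vice versa. Fix an arbitrary $x\in\Om$ and test the defining relation \eqref{eq: discretegreen} for $G_h^x$ against $\vh=\Ghz$:
\[ \Ghz(x)=G_h^x(z)=(\nabla G_h^x,\nabla\Ghz)_{\Om}. \]
Applying the Cauchy-Schwarz inequality and then the energy bound of Corollary \ref{corollary: L2 dgreen bound reggreen} to each factor yields
\[ |\Ghz(x)|\leq\|\nabla G_h^x\|_{L^2(\Om)}\|\nabla\Ghz\|_{L^2(\Om)}\leq Ch^{-1/2}\cdot Ch^{-1/2}=Ch^{-1}, \]
with $C$ independent of $x$, $z$, and $h$. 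Taking the supremum over $x\in\Om$ delivers $\|\Ghz\|_{L^\infty(\Om)}\leq Ch^{-1}$.

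For the gradient estimate I would then invoke the standard $L^\infty$ inverse inequality for piecewise linear functions on a quasi-uniform triangulation, namely $\|\nabla\vh\|_{L^\infty(\tau)}\leq Ch^{-1}\|\vh\|_{L^\infty(\tau)}$ on each simplex $\tau\in\Tau_h$. Applying this with $\vh=\Ghz$ and taking the supremum over elements gives
\[ \|\nabla\Ghz\|_{L^\infty(\Om)}\leq Ch^{-1}\|\Ghz\|_{L^\infty(\Om)}\leq Ch^{-2}, \]
and combining with the pointwise bound above yields the desired $\|\Ghz\|_{W^1_\infty(\Om)}\leq Ch^{-2}$.

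I do not anticipate a substantial obstacle; the argument is a short assembly of pieces that are already on the table. The key conceptual observation is that symmetry of the discrete Green's function converts pointwise $L^\infty$ control into an $H^1$-$H^1$ duality pairing, at which point the energy estimate from Corollary \ref{corollary: L2 dgreen bound reggreen} immediately gives the $h^{-1}$ bound. An alternative would be to argue via $\Ghz=R_h\reggreenz$ together with $L^\infty$-stability of the Ritz projection and the pointwise bound on $\reggreenz$, but in three dimensions that route requires additional regularity assumptions and is more delicate, so I would avoid it.
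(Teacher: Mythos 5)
Your proof is correct, but it follows a genuinely different route from the paper's. The paper goes directly from the energy bound to the $W^1_\infty$ bound in a single step via the Sobolev inverse estimate $\|\vh\|_{W^1_\infty(\Om)}\leq Ch^{-3/2}\|\vh\|_{H^1(\Om)}$ (Theorem 4.5.11 in Brenner--Scott), a \Poincare{} inequality, and Corollary \ref{corollary: L2 dgreen bound reggreen}. You instead split the argument into two stages: first you use the symmetry $\Ghz(x)=G_h^x(z)$, Cauchy--Schwarz, and Corollary \ref{corollary: L2 dgreen bound reggreen} applied to \emph{both} factors to obtain the uniform pointwise bound $\|\Ghz\|_{L^\infty(\Om)}\leq Ch^{-1}$, and then apply the elementary $L^\infty$--$L^\infty$ inverse estimate $\|\nabla\vh\|_{L^\infty}\leq Ch^{-1}\|\vh\|_{L^\infty}$. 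Both routes land on $Ch^{-2}$, and both ultimately depend on the same energy estimate; the factored exponents $h^{-3/2}\cdot h^{-1/2}$ (paper) versus $h^{-1}\cdot h^{-1}$ (yours) reflect where the $h$-loss is absorbed. The practical difference is that your intermediate result $\|\Ghz\|_{L^\infty(\Om)}\leq Ch^{-1}$ is a genuinely stronger byproduct than Lemma \ref{lemma: dGreen upperbound}, which only bounds $\Ghz$ at the singularity: without a discrete maximum principle the latter does not automatically give a global $L^\infty$ bound, whereas your symmetry argument does, and with a constant uniform in $z$. That is a nice self-contained observation worth recording independently. Two small housekeeping points: the supremum should be taken over $x\in\Omh$ (the function lives in $\Vhzero(\Omh)$ and is extended by zero), and it is worth noting explicitly that the constant in $\|\nabla G_h^x\|_{L^2}\leq Ch^{-1/2}$ is uniform in the singularity location because Lemma \ref{lemma: L2 inner reggreen and regdelta} has a constant independent of $z$ -- you rely on this when treating $x$ as variable.
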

\begin{proof}
	We first note that by Theorem 4.5.11 in \cite{MR2373954} we have the following,
	\begin{equation*}
		\|\dgreenz\|_{W^1_\infty(\Om)}\leq Ch^{-\frac{3}{2}}\|\dgreenz\|_{H^1(\Om)}.
	\end{equation*}
	Then since $\dgreenz\in H^1_0(\Om)$ we have,
	\begin{equation*}
		\|\dgreenz\|_{H^1(\Om)}\leq C|\dgreenz|_{H^1(\Om)}.
	\end{equation*}
	Therefore applying Corollary \ref{corollary: L2 dgreen bound reggreen} we see,
	\begin{align*}
		Ch^{-\frac{3}{2}}|\dgreenz|_{H^1(\Om)}&\leq Ch^{-\frac{3}{2}}\|\nabla\reggreenz\|_{L^2(\Omh)}\\
		&\leq Ch^{-2},
	\end{align*}
	and we arrive at the desired result.
	
\end{proof}
This result is most notable close to the singularity as it may suggest, due to Lemma \ref{lemma: dGreen upperbound}, that the discrete Green's function will have a persistent negative value. Otherwise, this result along with Lemma \ref{lemma: dGreen upperbound} can be thought of as an analog to the upper bounds of the continuous elliptic Green's function since this result implies,
\begin{equation*}
	\|\nabla\dgreenz\|_{L^\infty(\Om)}\leq Ch^{-2}.
\end{equation*}

Additionally, since Corollary \ref{corollary: L2 dgreen bound reggreen} and Lemma \ref{lemma: graddgreen up} give,
\begin{align*}
	\|\nabla\dgreenz\|_{L^2(\Om)}&\leq Ch^{-\frac{1}{2}}\quad\text{and}\\
	\|\nabla\dgreenz\|_{L^\infty(\Om)}&\leq Ch^{-2}.
\end{align*}
We can apply the interpolation inequality found in \cite{MR2597943} to get the following Corollary.
\begin{corollary}\label{corollary: Lp grad dgreen}
	There exists a constant $C$ independent of $h$ and $z$ such that 
	\begin{equation}\label{eq: Lp grad dgreen}
		\|\nabla\dgreenz\|_{L^p(\Omh)}\leq Ch^{\frac{3}{p}-2},\quad \text{for}\;\; 2\leq p\leq\infty.
	\end{equation}
\end{corollary}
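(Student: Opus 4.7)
The plan is to obtain the stated bound by plain log-convexity interpolation between the two endpoint estimates already in hand, namely the $L^2$ bound on $\nabla \dgreenz$ from Corollary~\ref{corollary: L2 dgreen bound reggreen} and the $L^\infty$ bound from Lemma~\ref{lemma: graddgreen up}. The interpolation inequality referenced from \cite{MR2597943} is the standard one: for any measurable $f$ and any $2 \le p \le \infty$,
\begin{equation*}
  \|f\|_{L^p(\Omh)} \;\le\; \|f\|_{L^2(\Omh)}^{2/p}\,\|f\|_{L^\infty(\Omh)}^{1-2/p},
\end{equation*}
which follows at once from $|f|^p = |f|^2 \cdot |f|^{p-2} \le |f|^2\,\|f\|_{L^\infty}^{p-2}$ after integrating.

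I would apply this with $f = |\nabla \dgreenz|$. By Corollary~\ref{corollary: L2 dgreen bound reggreen}, $\|\nabla\dgreenz\|_{L^2(\Omh)} \le C h^{-1/2}$, and by Lemma~\ref{lemma: graddgreen up}, $\|\nabla\dgreenz\|_{L^\infty(\Omh)} \le C h^{-2}$. Substituting,
\begin{equation*}
  \|\nabla\dgreenz\|_{L^p(\Omh)} \;\le\; \bigl(C h^{-1/2}\bigr)^{2/p}\,\bigl(C h^{-2}\bigr)^{1-2/p}
  \;=\; C\, h^{-1/p}\,h^{-2 + 4/p} \;=\; C\, h^{\,3/p - 2},
\end{equation*}
which is exactly the claimed bound. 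As a sanity check, $p=2$ gives back $h^{-1/2}$ and $p=\infty$ gives back $h^{-2}$, matching the two input estimates.

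There is really no obstacle here beyond the exponent arithmetic; the only thing to be careful about is that the interpolation is applied on $\Omh$ (which has finite measure, so the bound is legitimate for every $p$ in the stated range) and that the two endpoint lemmas both hold with constants independent of $h$ and $z$, so the constant produced by the interpolation also inherits that uniformity. No new discrete or geometric machinery is needed.
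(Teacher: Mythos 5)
Your proof is correct and follows exactly the paper's route: interpolating the $L^2$ bound from Corollary~\ref{corollary: L2 dgreen bound reggreen} against the $L^\infty$ bound from Lemma~\ref{lemma: graddgreen up} via the standard log-convexity inequality, with the exponent arithmetic checking out. The only difference is that you spell out the interpolation inequality explicitly, which the paper leaves to the cited reference.
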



\section{On the Positivity of the Discrete Green's Function in 3D}\label{sec. dgreen positivity}
It is well known that the classical Green's function for \eqref{eq: Greens} is positive. However, the discrete Green's function arising from the finite element method is not as straightforward. For $N=2$, Dr\v{a}g\v{a}nescu, et al. \cite{MR2085400}, provided a counterexample showing on general meshes the discrete Green's function may have a persistent negative value. For these meshes, the negative value and the singularity were both an $O(h)$ away from the boundary of the mesh. Later for $N=2$, Leykekhman and Pruitt \cite{MR3614014}, showed on a quasi-uniform, shape regular mesh of a polygonal domain, that if the singularity is of $O(1)$ away from the boundary the discrete Green's function must eventually be non-negative. With the help of a solution representation using the discrete Green's function they were then able to show the non-negativity of discrete harmonic functions which then led to the establishment of a discrete Harnack type inequality. In this section we will work towards showing the positivity of the discrete Green's function in three dimensions. 

Positivity at the singularity is a simple consequence of Definition \ref{def: discretegreen}. By choosing $\vh=\Ghz$ we see that,
\begin{equation*}
	\Ghz(z)=(\nabla\Ghz,\nabla\Ghz)_\Om=|\Ghz|^2_{H^1(\Om)}>0.
\end{equation*}
Considering the estimate \eqref{eq: Green estiamtes} for the classical Green's function one would expect that $\Ghz(z)\ra\infty$ as $h\ra0$. In fact, in Section 5 of \cite{MR2085400}, it was conjectured that,
\begin{equation*}
	\Ghz(z)=O(h^{2-N})\quad\forall z\;\text{(up to the boundary)}.
\end{equation*}
We shall show that this holds for $N=3$, however, the proof can be easily adapted for $N>3$. 
\begin{theorem}\label{lemma: dgreen lowerbound}
	There exists a constant C independent of $h$ and $z$ such that
	\begin{equation}\label{eq: dgreen lowerbound}
		\Ghz(z)\geq Ch^{-1}.
	\end{equation}
\end{theorem}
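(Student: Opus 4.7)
The plan is to couple the identity $\Ghz(z) = \|\nabla\Ghz\|_{L^2(\Om)}^2$, which is already implicit in the proof of Lemma \ref{lemma: dGreen upperbound}, with a dimension-sensitive lower bound on the discrete Dirichlet energy obtained by testing \eqref{eq: discretegreen} against a carefully chosen nodal basis function.

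The starting point is to set $v_h=\Ghz$ in \eqref{eq: discretegreen}, which yields
\begin{equation*}
\Ghz(z) \;=\; (\nabla\Ghz,\nabla\Ghz)_{\Om} \;=\; \|\nabla\Ghz\|_{L^2(\Om)}^2.
\end{equation*}
Hence it suffices to prove $\|\nabla\Ghz\|_{L^2(\Om)} \geq c\,h^{-1/2}$ for a constant $c>0$ independent of $h$ and $z$.

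To extract this, I would appeal to the dual form of \eqref{eq: discretegreen}: for every nontrivial $v_h\in\Vhzero(\Omh)$, the Cauchy--Schwarz inequality gives
\begin{equation*}
|v_h(z)| \;=\; |(\nabla\Ghz,\nabla v_h)_{\Om}| \;\leq\; \|\nabla\Ghz\|_{L^2(\Om)}\,\|\nabla v_h\|_{L^2(\Om)},
\end{equation*}
so that $\|\nabla\Ghz\|_{L^2(\Om)} \geq |v_h(z)|/\|\nabla v_h\|_{L^2(\Om)}$. I would then test with $v_h=\phi_i$, the nodal hat function attached to a vertex $x_i$ of the element $\tau_0\ni z$ whose barycentric coordinate at $z$ is largest; since the four barycentric coordinates at $z$ sum to one, pigeonhole forces $\phi_i(z)\geq 1/4$. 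Under the standing separation $B_z(ch)\subset\Om$ already in force in Lemma \ref{lemma: L2 inner reggreen redelta below}, the element $\tau_0$ must possess at least one interior vertex, and with $c$ chosen sufficiently large the barycentric coordinate at such an interior vertex is bounded below by an absolute constant, so one may in fact take $x_i$ interior and therefore $\phi_i\in\Vhzero(\Omh)$.

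The denominator is then handled by the textbook shape-regular estimate $|\nabla\phi_i|\leq Ch^{-1}$ on each of an $O(1)$ number of supporting tetrahedra of volume $\sim h^3$: this gives $\|\nabla\phi_i\|_{L^2(\Om)}^2\leq Ch$ in three dimensions. Substitution then yields $\|\nabla\Ghz\|_{L^2(\Om)}\geq c\,h^{-1/2}$, and squaring produces $\Ghz(z)\geq c^2h^{-1}$. The one delicate point is ensuring the pigeonhole vertex can be taken interior, which is exactly the role of the interiority hypothesis on $z$. The approach is dimension-sensitive: since $\|\nabla\phi_i\|_{L^2}^2\sim h^{N-2}$, the same reasoning in $\RN^N$ delivers $\Ghz(z)\gtrsim h^{2-N}$, matching the conjectured behavior for every $N\geq 3$.
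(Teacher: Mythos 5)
Your argument is correct, and it is a genuinely different (and more elementary) route than the paper's. You exploit the reproducing--kernel duality: from $(\nabla\Ghz,\nabla v_h)_{\Om}=v_h(z)$ and Cauchy--Schwarz you get $\Ghz(z)=\|\nabla\Ghz\|_{L^2(\Om)}^2\geq |v_h(z)|^2/\|\nabla v_h\|_{L^2(\Om)}^2$ for any nontrivial $v_h\in\Vhzero(\Omh)$, and the single test function $v_h=\phi_i$ with $\phi_i(z)\geq 1/4$ (pigeonhole on barycentric coordinates) and $\|\nabla\phi_i\|_{L^2}^2\leq Ch^{N-2}$ finishes the proof. The paper instead works at the operator level: it writes $-\disLaplace\Ghz=P_h\regdeltaz$, derives two-sided Rayleigh-quotient bounds on $(-\disLaplace)^{-1}$ from the \Poincare\ and inverse inequalities (via the appendix lemma on inverse matrices), and then proves $\|P_h\regdeltaz\|_{L^2(\Om)}^2\geq Ch^{-3}$ by identifying $P_h\regdeltaz(z)$ with a diagonal entry of the inverse mass matrix and invoking Rannacher's mass-matrix eigenvalue bounds. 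Your approach buys simplicity: it bypasses the discrete Laplacian, the mass matrix, and the appendix lemma entirely, it does not require $z$ to coincide with a mesh node $x_k$ (which the paper's proof implicitly does), and it yields the general scaling $\Ghz(z)\gtrsim h^{2-N}$ for all $N\geq 3$ with no extra work. What the paper's approach buys is the intermediate estimate $\|P_h\regdeltaz\|_{L^2}^2\geq Ch^{-3}$ and the spectral bounds on $(-\disLaplace)^{-1}$, which have independent value. The one point you should tighten is the interiority step: rather than arguing that ``at least one interior vertex has a barycentric coordinate bounded below,'' note that under $B_{ch}(z)\subset\Om$ with $c>1$ every vertex of the element $\tau_0\ni z$ lies within $\mathrm{diam}(\tau_0)\leq h<ch$ of $z$ and hence (for $h$ small, since $\partial\Omh$ approximates $\partial\Om$) is an interior node, so the pigeonhole vertex itself may be used; as stated, a prescribed interior vertex could have an arbitrarily small barycentric coordinate at $z$. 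With that repair the proof is complete, and it carries the same implicit separation-from-the-boundary hypothesis that the paper's own proof does.
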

\begin{proof}
	We again note that by \eqref{eq: discretegreen}, \eqref{eq: regdelta}, \eqref{eq: L2 projection}, and \eqref{eq: discrete Laplace} we have,
	\begin{equation}\label{eq: dLaplace dGreen}
		-\disLaplace\Ghz=P_h\regdeltaz,
	\end{equation}
	where $	-\disLaplace$ is the discrete Laplace operator. Since for all $\vh\in\Vhzero(\Omh)$ we have,
	\begin{equation*}
		(-\disLaplace\Ghz,\vh)_\Om=\underbrace{(\nabla\Ghz,\nabla\vh)_\Om=(\regdeltaz,\vh)_\Om}_{=\vh(z)}=(P_h\regdeltaz,\vh)_\Om.
	\end{equation*}
	Now, for all $\vh\in\Vhzero$ we have by the \Poincare's inequality,
	\begin{equation}\label{eq: dLaplace poincare}
		C_1(\vh,\vh)_\Om\leq(\nabla\vh,\nabla\vh)_\Om=(-\disLaplace\vh,\vh)_\Om.
	\end{equation}
	In addition, the inverse estimate \eqref{eq: inverse L2} gives,
	\begin{equation}\label{eq: dLaplace inverse}
		(-\disLaplace\vh,\vh)_\Om=(\nabla\vh,\nabla\vh)_\Om\leq C_2h^{-2}(\vh,\vh)_\Om.
	\end{equation}
	Combining \eqref{eq: dLaplace poincare} and \eqref{eq: dLaplace inverse} we see that,
	\begin{equation*}
		C_1\leq\dfrac{(\vh,-\disLaplace\vh)_\Om}{(\vh,\vh)_\Om}\leq C_2h^{-2}.
	\end{equation*}
	This then implies the following estimate which holds by Lemma \ref{lemma: inverse matrix bounds}, given in the appendix with proof,
	\begin{equation}\label{eq: inverse dLaplace bounds}
		C_2^{-1}h^2\leq\dfrac{(\vh,(-\disLaplace)^{-1}\vh)_\Om}{(\vh,\vh)_\Om}\leq C_1^{-1}.
	\end{equation}
	Now by \eqref{eq: dLaplace dGreen} and \eqref{eq: inverse dLaplace bounds} we have,
	\begin{align*}
		\Ghz(z)&=(\regdeltaz,\Ghz)_\Om\\
		&=(P_h\regdeltaz,\Ghz)_\Om\\
		&=(P_h\regdeltaz,(-\disLaplace)^{-1}P_h\regdeltaz)_\Om\\
		&\geq Ch^2\|P_h\regdeltaz\|^2_{L^2(\Om)}.
	\end{align*}
	Here the key step is to apply the $L^2$-projection to $\regdeltaz$ in order to project our regularized delta function onto $\Vhzero(\Om)$.
	
	Therefore it remains to show $\|P_h\regdeltaz\|^2_{L^2(\Om)}\geq Ch^{-3}$. Indeed, by \eqref{eq: L2 projection},
	\begin{equation*}
		\|P_h\regdeltaz\|^2_{L^2(\Om)}=(P_h\regdeltaz,P_h\regdeltaz)_\Om=(P_h\regdeltaz,\regdeltaz)_\Om=P_h\regdeltaz(z),
	\end{equation*}
	where $P_h\regdeltaz(x)=\sum_{i=1}^n\gamma_i\phi_i(x)$ by definition of $P_h\regdeltaz\in\Vhzero(\Omh)$ using the nodal basis.
	Now for $z=x_k$, $x_k$ an interior node, we get,
	$P_h\regdeltaxk(x_k)=\sum_{i=1}^n\gamma_i\phi_i(x_k)=\gamma_k$. In addition, for $j=1,...,n$ we have,
	\begin{equation*}
		(P_h\regdeltaxk,\phi_j)_\Om=(\regdeltaxk,\phi_j)_\Om=\phi_j(x_k)=\delta_{kj},
	\end{equation*}
	and therefore we get,
	\begin{equation*}
		(P_h\regdeltaxk,\phi_j)_\Om=(\sum_{i=1}^n\gamma_i\phi_i(x),\phi_j)_\Om=\sum_{i=1}^n\gamma_i(\phi_i,\phi_j)_\Om=\delta_{kj},\quad j=1,...,n.
	\end{equation*}
	Interpreting the system as a matrix equation,
	\begin{equation*}
		M\vec{\gamma}=\vec{e_k},
	\end{equation*}
	where $M$ is the \textit{Mass matrix} with entries $M_{ij}=(\phi_i,\phi_j)_\Om$, $\vec{\gamma}^\top=(\gamma_1,...,\gamma_n)^\top$, and $\vec{e_k}^\top=(0,...,0,1,0,...,0)^\top$ with $1$ in the $k^{th}$ position. Which then implies,
	\begin{equation}\label{eq: gammak equality}
		\gamma_k=\vec{e_k}^\top\vec{\gamma}=\vec{e_k}^\top M^{-1}\vec{e_k}.
	\end{equation}
	Now by properties of the Mass matrix for finite elements in three dimensions, see for example, Rannacher \cite{Rannacher2008} (page 121), there exists constants $c_1$ and $c_2$ independent of $h$ such that for any $\vec{v}\in\RN^n\backslash\{\vec{0}\}$,
	\begin{equation*}
		c_1h^3\leq\dfrac{\vec{v}^\top M\vec{v}}{\vec{v}^\top\vec{v}}\leq c_2h^3.
	\end{equation*}
	Therefore by Lemma \ref{lemma: inverse matrix bounds} we see,
	\begin{equation}\label{eq: Mass Matrix bounds}
		c_2^{-1}h^{-3}\leq\dfrac{\vec{v}^\top M^{-1}\vec{v}}{\vec{v}^\top\vec{v}}\leq c_1^{-1}h^{-3}.
	\end{equation}
	Thus, choosing $\vec{v}=\vec{e_k}$ and substituting \eqref{eq: gammak equality} into \eqref{eq: Mass Matrix bounds} we have shown $\|P_h\regdeltaz\|^2_{L^2(\Om)}\geq Ch^{-3}$ and the result follows.
	
\end{proof}

We shall now show that the discrete Green's function is positive away from the singularity. The proof technique used will give us insight into the scaling of $\Ghz$.
\begin{theorem}\label{lemma: Ghz positive away}
	Assume for some constant $K$ that $|x-z|\geq Kh^{\frac{1}{2}}$. Then $\Ghz(x)>0$.
\end{theorem}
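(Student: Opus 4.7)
The plan is to decompose $\dgreenz(x)$ using the regularized delta $\regdeltax$ supported in an element $\tau_x$ containing $x$, and to show the resulting error term is dominated by the main term whenever $|x-z|\geq Kh^{1/2}$. Since $\dgreenz\in\Vhzero$, we begin from
\[
\dgreenz(x)=(\regdeltax,\dgreenz)_\Om=(\regdeltax,\reggreenz)_\Om-(\regdeltax,\reggreenz-\dgreenz)_\Om.
\]

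For the main term, I would adapt the argument of Lemma \ref{lemma: L2 inner reggreen redelta below} and Corollary \ref{lemma: bounds on reggreen}. For $h$ small, $|x-z|\geq Kh^{1/2}\gg h$, so every $y\in\tau_x$ and every $w\in\tau_0$ (the support of $\regdeltaz$) satisfy $|y-w|\leq 2|x-z|$. Substituting the continuous representation $\reggreenz(y)=\int_{\tau_0}G(y,w)\regdeltaz(w)\,dw$ and invoking the classical lower bound $G(y,w)\geq c|y-w|^{-1}$ from Lemma \ref{lemma: lower bound greens}, one obtains $\reggreenz(y)\geq C/|x-z|$ throughout $\tau_x$, and hence $(\regdeltax,\reggreenz)_\Om\geq C/|x-z|$.

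For the error term, the crucial observation is that $\reggreenz$ is smooth on a neighborhood of $\tau_x$ with $|D^2\reggreenz(y)|\leq C|x-z|^{-3}$, since every $y\in\tau_x$ lies at distance at least $|x-z|/2$ from $z$. I would then appeal to an interior $L^\infty$ Ritz projection estimate of Schatz--Wahlbin type together with the global $L^2$ bound $\|\reggreenz-\dgreenz\|_{L^2(\Om)}\leq Ch^{1/2}$ (which follows from Corollary \ref{corollary: L2 dgreen bound reggreen} by Aubin--Nitsche duality) to obtain
\[
\|\reggreenz-\dgreenz\|_{L^\infty(B(x,|x-z|/2))}\leq Ch^2|\log h|\,|x-z|^{-3}+Ch^{1/2}|x-z|^{-3/2}.
\]
Coupling this with $\|\regdeltax\|_{L^1(\tau_x)}\leq C$ from \eqref{eq: regdelta estimates} yields $|(\regdeltax,\reggreenz-\dgreenz)_\Om|\leq Ch^2|\log h|/|x-z|^3+Ch^{1/2}/|x-z|^{3/2}$. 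Requiring this to be strictly less than $C/|x-z|$ reduces to $|x-z|>Ch\sqrt{|\log h|}$, which is comfortably implied by $|x-z|\geq Kh^{1/2}$ for $K$ sufficiently large and $h$ sufficiently small.

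The main obstacle is the sharp interior $L^\infty$ Ritz estimate. A naive global bound, derived from $\|\nabla\reggreenz\|_{L^\infty}\leq Ch^{-2}$ (Lemma \ref{lemma: Linfty grad reggreen}) via an interpolation inequality, gives only $\|\reggreenz-\dgreenz\|_{L^\infty(\Om)}\leq Ch^{-1}$, which is far too weak: it would yield positivity only on the opposite regime $|x-z|<Ch$. Localizing around $x$ and exploiting the smoothness of $\reggreenz$ there, through interior pointwise Ritz estimates of Schatz--Wahlbin type, is essential for the error to acquire the negative powers of $|x-z|$ that enable it to be dominated by the leading $C/|x-z|$ for all $|x-z|\geq Kh^{1/2}$.
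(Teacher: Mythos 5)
Your proposal is correct and follows the same skeleton as the paper's proof: write $\Ghz(x)$ as a positive main term obtained by pairing the continuous (here, regularized) Green's function against $\regdeltax$, bound that term below by $C|x-z|^{-1}$ using Lemma \ref{lemma: lower bound greens}, and show the finite element error term is asymptotically negligible. The one substantive difference is how the error term is controlled. The paper invokes, in a single step, the weighted global pointwise estimate of Schatz--Wahlbin, $|\Ghz(x)-G^z(x)|\le Ch^2\ln(|x-z|/h)\,|x-z|^{-3}$ for $|x-z|\ge Kh$ (see \eqref{eq: 6.3 estimate S and W}), whereas you reassemble an essentially equivalent bound from the interior maximum-norm estimate plus the global duality bound $\|\reggreenz-\Ghz\|_{L^2(\Om)}\le Ch^{1/2}$ derived from Corollary \ref{corollary: L2 dgreen bound reggreen}; the extra contribution $Ch^{1/2}|x-z|^{-3/2}$ this produces is harmless, since it is dominated by $|x-z|^{-1}$ once $|x-z|\ge Ch$. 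Both routes rest on the same nonelementary Schatz--Wahlbin machinery (neither proof is self-contained on this point), and both actually yield positivity in the larger regime $|x-z|\ge Kh\sqrt{|\log h|}$ rather than only $|x-z|\ge Kh^{1/2}$, so the stated threshold is not sharp in either argument. Your closing observation---that a global $L^\infty$ bound on $\reggreenz-\Ghz$ is far too crude and that the whole point is a localized or weighted pointwise error estimate---is exactly the right diagnosis and is precisely what the paper's citation of Schatz--Wahlbin supplies.
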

\begin{proof}
	We know using \eqref{eq: regdelta} that,
	\begin{align*}
		\Ghz(x)&=(\Ghz,\regdeltax)_{\tau_x}\\
		&=\underbrace{(\Ghz-G^z,\regdeltax)_{\tau_x}}_{I_1}+\underbrace{(G^z,\regdeltax)_{\tau_x}}_{I_2},
	\end{align*}
	where $\tau_x$ is the element containing the support of $\regdeltax$. Clearly $I_2$ is positive due to the properties of $G^z$ and $\regdeltax$, however, we can find a lower bound for $I_2$. Observe,
	\begin{align*}
		(G^z,\regdeltax)_{\tau_x}&=\int_{\tau_x}G^z(y)\cdot\regdeltax(y)\dy\\
		&\geq C\int_{\tau_x}|x-y|^{-1}\cdot\regdeltax(y)\dy,\quad\text{by }\eqref{eq: lower bound greens}\\
		&\geq C(|z-x|+h)^{-1}\int_{\tau_x}\regdeltax(y)\dy,\quad\text{since }|x-y|\leq ch\\
		&\geq C|z-x|^{-1}>0.
	\end{align*}
	Now we shall show that $I_1\ra0$ as $h\ra0$, therefore showing $\Ghz(x)\approx C|z-x|^{-1}$ when $|x-z|\geq Kh^{\frac{1}{2}}$, showing that the behavior of $\Ghz(x)$ is consistent with the classical Green's function. Indeed, we will use Theorem 6.1 where $K$ is the constant $C_7$ in estimate (6.3) in \cite{MR431753} which states: if $|x-z|\geq Kh$, then
	\begin{equation}\label{eq: 6.3 estimate S and W}
		|\Ghz(x)-G^z(x)|\leq \dfrac{Ch^2\ln\big(\frac{|x-z|}{h}\big)}{|x-z|^3}.
	\end{equation}
	Adopting the notation $\omega=\frac{|x-z|}{h}$ and using the assumption \eqref{eq: 6.3 estimate S and W} becomes,
	\begin{align}\label{eq: 6.3 estimate new}
		\begin{split}
			|\Ghz(x)-G^z(x)|&\leq\dfrac{C\ln(\omega)\omega^{-2}}{|x-z|}\\
			&\leq\dfrac{C\ln(\omega)\omega^{-2}}{Kh^{\frac{1}{2}}}.
		\end{split}
	\end{align}
	Additionally $\omega\geq Kh^{-\frac{1}{2}}$ and observe that $\ln(\omega)\omega^{-2}$ is decreasing for $\omega\geq\sqrt{e}$. Therefore it's maximum must occur at the left endpoint of $\omega\geq Kh^{-\frac{1}{2}}$ for $h$ small enough. Now \eqref{eq: 6.3 estimate new} becomes,
	\begin{align*}
		|\Ghz(x)-G^z(x)|&\leq\dfrac{C\ln(\omega)\omega^{-2}}{Kh^{\frac{1}{2}}}\\
		&\leq \dfrac{C\ln(Kh^{-\frac{1}{2}})K^{-2}h}{Kh^{\frac{1}{2}}}\\
		&\leq C\ln(Kh^{-\frac{1}{2}})h^{\frac{1}{2}},
	\end{align*}
	where $C\ln(Kh^{-\frac{1}{2}})h^{\frac{1}{2}}\ra0$ as $h\ra0$. 
	Thus, $\Ghz(x)>0$.
	
\end{proof}

\section{Numerical Results}\label{numerical}
In this section we shall provide numerical results concerning the positivity of $\dgreenz(x)$. The results were obtained using the MATLAB package iFEM \cite{LC2009} developed by L. Chen.

The first mesh we consider is a cube positioned at the nodes \{(-1,-1,-1), (-1,-1,1), (-1,1,1), (1,-1,-1), (1,1,-1), (-1,1,-1), (1,-1,1), (1,1,1)\} with the singularity at $z=$(-0.75,-0.75,-0.75). The initial triangulation is Delaunay.
\begin{figure}[H]
	\centering
	\includegraphics[scale=.45]{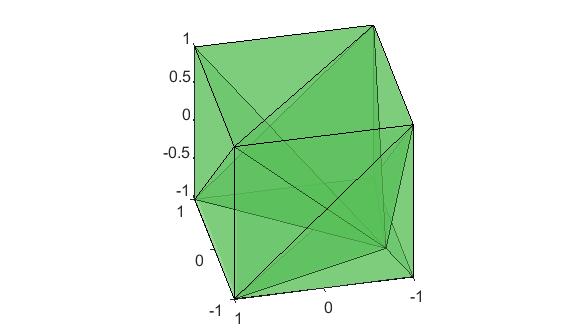}
	\caption{First triangulation of the cubic domain.}
\end{figure}

We apply a uniform refinement algorithm which subdivides each tetrahedron into 8 smaller and similar ``sub-tetrahedrons" of equal volume. To retain shape regularity of the mesh we calculate the lengths of the three interior diagonals of the octahedron and subdivide through the shortest diagonal. 
\begin{figure}[H]
	\centering
	\includegraphics[scale=.3]{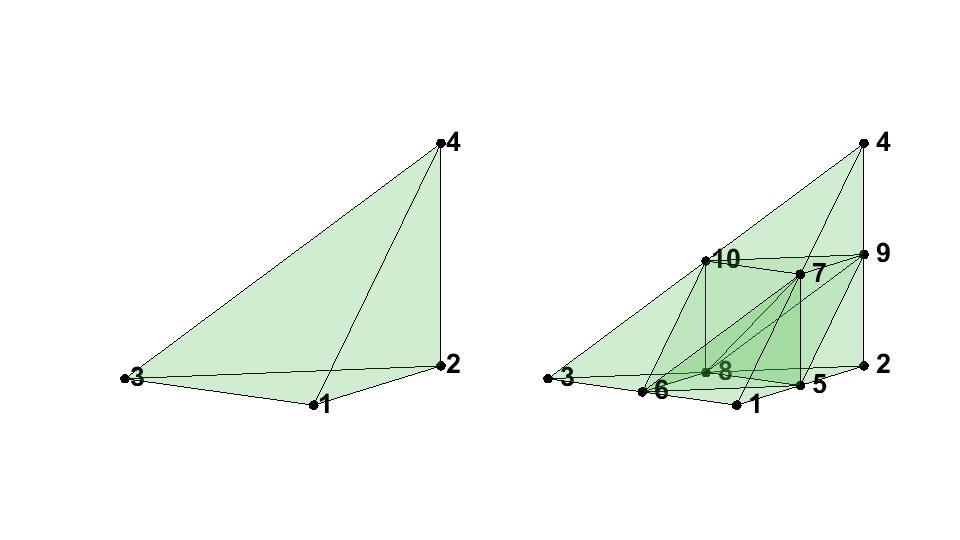}
	\caption{Uniform refinement of a single tetrahedron across shortest diagonal.}
	\label{fig: exuniformrefinel}
\end{figure}
The result of this mesh refinement scheme are shown below. We observe a consistent negative value for the discrete Green's function. Additionally, after the third refinement, we see the number of nodes increases by an approximate factor of 8 and we should have $\Ghz$ increase by an approximate factor of 2. 
\begin{table}[H]
	\begin{center}
		\begin{tabular}{|c|c|c|c|} 
			\hline
			No. Nodes & No. Elements & $\min_{x \in \Omega_h}\Ghz(x)$ & $\max_{x \in \Omega_h}\Ghz(x)$\\
			\hline 
			35 & 96 & -0.00279777 & 0.148468\\ 
			\hline 
			189 & 768 & -0.0252328 & 0.366708\\ 
			\hline 
			1241 & 6144 & -0.0144661 & 0.841241\\ 
			\hline 
			9009 & 49152 & -0.00116218 & 1.84198\\ 
			\hline 
			68705 & 393216 & -0.00249823 & 3.88512\\ 
			\hline 
			536769 & 3.14573e+06 & -0.000698242 & 7.99231\\  
			\hline 
		\end{tabular}
	\end{center}
	\caption{Discrete Green's function values using uniform refinement across shortest diagonal.}
	\label{table: mesh1 uniformrefinel}
\end{table}

We are now motivated to investigate local refinement around the singularity. For the second mesh, we shall again consider a cube positioned at the nodes \{(-1,-1,-1), (-1,-1,1), (-1,1,1), (1,-1,-1), (1,1,-1), (-1,1,-1), (1,-1,1), (1,1,1)\} with the singularity at $z=$(-0.75,-0.75,-0.75). Except we would like to refine locally around the singularity. In this case, we also add in the nodes \{(-.74,-.74,-.74), (-.74,-.74,-.76), (-.74,-.76,-.76), (-.76,-.74,-.76), (-.76,-.76,-.74), (-.74,-.76,-.74), (-.76,-.74,-.74), (-.76,-.76,-.76)\} to generate the initial mesh. We again use a uniform refinement scheme across the shortest diagonal to obtain the following results.
\begin{table}[H]
	\begin{center}
		\begin{tabular}{|c|c|c|c|} 
			\hline
			No. Nodes & No. Elements & $\min_{x \in \Omega_h}\Ghz(x)$ & $\max_{x \in \Omega_h}\Ghz(x)$\\
			\hline 
			87 & 384 & 0 & 25\\ 
			\hline 
			581 & 3072 & -0.000515941 & 58.7273\\ 
			\hline 
			4329 & 24576 & -0.00000412 & 130.022\\ 
			\hline 
			33617 & 196608 & -0.00275881 & 273.436\\ 
			\hline 
			265377 & 1.57286e+06 & -0.00138705 & 566.92\\ 
			\hline 
			2.10976e+06 & 1.25829e+07 & -0.00102935 & 1139.63\\ 
			\hline 
		\end{tabular}
	\end{center}
	\caption{Discrete Green's function values using local and uniform refinement across shortest diagonal.}
	\label{table: mesh2 uniformrefinel}
\end{table}

As we can see, in each case $|\min_{x \in \Omega_h}\Ghz(x)|$ begins to eventually decrease which may give rise to potential mesh restrictions that will guarantee positivity. One could, for example, require that the stiffness matrix $A$ be a $M$-matrix. This would automatically imply positivity of the discrete Green's function. However, in three dimensions, this is quite restrictive to the triangulation. We are interested in being able to describe weaker mesh restrictions that guarantee positivity.

\section{Conclusions}
In this paper, we have established some sharp pointwise bounds on the discrete Green's function. In particular we showed that in three dimensions the discrete Green's function cannot be uniformly bounded in $h$ at the singularity. Furthermore, we show that away from the singularity the discrete Green's function is positive and decays exponentially to the boundary. We also prove analogous results for the discrete Green's function showing similarities to the continuous Green's function. Numerically, we show that on an a convex domain with sufficiently smooth boundary on an unstructured mesh the discrete Green's function has persistent negative values. The results are summarized in the following table.

	\begin{table}[h]
\begin{center}
			\begin{tabular}{|M{4cm}|M{4cm}|}
			\hline
		\vspace{1pt} Let $x\in\Omh$                           & \vspace{1pt}Results for $\Ghz(x)$     \\[5pt] \hline
		\vspace{1pt}	$x=z$                                    & \vspace{1pt}$\Ghz(z)\geq Ch^{-1}$     \\[5pt] \hline
		\vspace{1pt}	$x\in\Omh\backslash B_{Kh^{\frac{1}{2}}}(z)$ & \vspace{1pt}$\Ghz(x)\geq C|x-z|^{-1}$ \\[5pt] \hline
			\vspace{1pt}	$x\in B_{Kh^{\frac{1}{2}}}(z)$               & \vspace{1pt}$\Ghz(x)<0$*              \\[5pt] \hline
				\end{tabular}
\end{center}
			\caption{Summary of results for the discrete Green's function. *\textit{Numerical Result}.}
			\end{table}
		
Additionally we establish $L^p$ estimates for the gradient of both the discrete and regularized Green's function for $2\leq p\leq\infty$. Furthermore, we prove a $L^1$ estimate for $\nabla\reggreenz$. At this time we suspect a similar result can be shown for $\dgreenz$. Additionally, establishing a lower bound for $\nabla\dgreenz$ close to the singularity is an immediate priority. We believe these results could be used in establishing best approximation results in optimal control problems.

Going forward our goal is to theoretically establish the existence of a persistent negative value for the discrete Green's function in three dimensions on unstructured meshes. We are also interested in providing a constructive mesh for which negativity occurs as well as detailing weaker mesh restrictions that give positivity. Finally, another direction for investigation may include the extension of the Harnack inequality to the inhomogeneous case and to parabolic or more general elliptic equations on non-smooth domains.

\section*{Acknowledgments}
The author would like to thank Prof. Dmitriy Leykekhman for the many valuable discussions and support throughout the preparation of this paper.
 
\appendix
\section{Miscellaneous Proofs}
In the paper \textit{Variational Bounds on the Entries of a Inverse Matrix} \cite{MR1186730} the authors conclude the following result, which was critical in the proof of Theorem \ref{lemma: dgreen lowerbound} so we felt as though it were important to include a proof of the result.
\begin{lemma}\label{lemma: inverse matrix bounds}
	Assume $\veccv\in\RN^n\backslash\{\vec{0}\}$ and the matrix $A$ is symmetric. Then  the following inequalities are equivalent. 
	\begin{equation}\label{eq: inverse matrix bounds}
		\beta\leq\dfrac{\veccvT A\veccv}{ \veccvT \veccv}\leq\alpha\;\;\iff\;\;\frac{1}{\alpha}\leq\dfrac{\veccvT A^{-1}\veccv}{ \veccvT \veccv}\leq\frac{1}{\beta}.
	\end{equation}
\end{lemma}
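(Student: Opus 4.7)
My plan is to reduce the statement to a simple eigenvalue computation via the spectral theorem. Since $A$ is symmetric, the orthogonal diagonalization $A = Q\Lambda Q^{\top}$ exists with $\Lambda = \operatorname{diag}(\lambda_1,\dots,\lambda_n)$ and $Q^{\top}Q = I$. The Rayleigh quotient on the left-hand side is invariant under this change of basis: setting $\vec{w} = Q^{\top}\vec{v}$, one has $\vec{v}^{\top}A\vec{v} = \vec{w}^{\top}\Lambda\vec{w} = \sum_i \lambda_i w_i^2$ and $\vec{v}^{\top}\vec{v} = \vec{w}^{\top}\vec{w} = \sum_i w_i^2$. Thus the universal quantifier ``$\beta \leq \vec{v}^{\top}A\vec{v}/\vec{v}^{\top}\vec{v} \leq \alpha$ for all $\vec{v}\neq \vec{0}$'' is equivalent to $\beta \leq \lambda_i \leq \alpha$ for every eigenvalue $\lambda_i$; take $\vec{w} = \vec{e}_i$ in one direction, and bound the convex combination $\sum_i \lambda_i w_i^2 / \sum_i w_i^2$ in the other.

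Next, under the hypothesis $\beta > 0$ (which is the case in the application, as the relevant constants come from \Poincare and mass-matrix lower bounds), every $\lambda_i$ is strictly positive, so $A$ is invertible with $A^{-1} = Q\Lambda^{-1}Q^{\top}$, whose eigenvalues are $1/\lambda_i$. Applying the same Rayleigh-quotient characterization to $A^{-1}$, the bound $\beta \leq \lambda_i \leq \alpha$ inverts monotonically to $1/\alpha \leq 1/\lambda_i \leq 1/\beta$, which is exactly the statement
\[
\frac{1}{\alpha} \leq \frac{\vec{v}^{\top}A^{-1}\vec{v}}{\vec{v}^{\top}\vec{v}} \leq \frac{1}{\beta}.
\]
The reverse implication follows by the same argument applied to $A^{-1}$, using that $(A^{-1})^{-1} = A$.

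There is no real obstacle here beyond being careful about two minor points. First, one needs $\beta > 0$ (or at least $0 \notin [\beta,\alpha]$) to guarantee invertibility of $A$; this is implicit in the paper's use of the lemma, since both applications (the discrete Laplacian via \Poincare and the mass matrix) produce strictly positive lower bounds. Second, one should record that the equivalence is a genuine two-way statement: the argument is completely symmetric in $A$ and $A^{-1}$ since $A^{-1}$ is also symmetric and invertible with the same eigenvectors. I would present the proof in two short paragraphs: first establishing the Rayleigh quotient characterization via diagonalization, then applying it to both $A$ and $A^{-1}$ and observing that $\lambda \mapsto 1/\lambda$ is an order-reversing bijection on $(0,\infty)$.
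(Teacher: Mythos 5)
Your proof is correct, but it takes a different route from the paper. The paper's argument is a one-line substitution: assuming the left-hand inequality holds for all nonzero vectors, it applies that inequality to the vector $A^{-\frac{1}{2}}\veccv$ and simplifies the numerator and denominator using the symmetry of $A$, which turns $\veccvT A\veccv/\veccvT\veccv$ evaluated at $A^{-\frac{1}{2}}\veccv$ into $\veccvT\veccv/\veccvT A^{-1}\veccv$; taking reciprocals gives the right-hand inequality. You instead diagonalize $A=Q\Lambda Q^{\top}$, characterize the Rayleigh-quotient bounds as $\beta\leq\lambda_i\leq\alpha$ for every eigenvalue, and invert the eigenvalues. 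The two arguments rest on the same hidden hypothesis — the paper's use of $A^{-\frac{1}{2}}$ requires $A$ to be positive definite just as your eigenvalue inversion requires $\lambda_i>0$ — but your version has the merit of making that hypothesis ($\beta>0$, or at least $0\notin[\beta,\alpha]$) explicit, whereas the paper states the lemma for general symmetric $A$ and silently invokes $A^{-\frac{1}{2}}$. Your approach also shows the bounds are sharp (attained at eigenvectors), which the substitution argument does not. The paper's proof is shorter and avoids the spectral theorem; yours is more careful about when the statement actually holds. Either is acceptable; if you keep yours, do state the positivity hypothesis in the lemma itself rather than only in the proof.
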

\begin{proof}
	We shall start with the left hand result and show it is equivalent to the right hand side. The reverse implication follows immediately. That is, assume
	$$\beta\leq\dfrac{\veccvT A\veccv}{ \veccvT \veccv}\leq\alpha,$$
	holds for all $\veccv\in\RN^n\backslash\{\vec{0}\}$. Then the result also holds for $A^{-\frac{1}{2}}\veccv$. Therefore by the symmetry of $A$ we have,
	\begin{align*}
		\beta\leq\dfrac{(A^{-\frac{1}{2}}\veccv)^\top AA^{-\frac{1}{2}}\veccv}{ (A^{-\frac{1}{2}}\veccv)^\top A^{-\frac{1}{2}}\veccv }\leq\alpha&\implies\beta\leq\dfrac{\veccvT A^{-\frac{1}{2}} A^{\frac{1}{2}}\veccv}{(A^{-\frac{1}{2}}\veccv)^\top A^{-\frac{1}{2}}\veccv }\leq\alpha\\
		&\implies\beta\leq\dfrac{\veccvT A^{-\frac{1}{2}} A^{\frac{1}{2}}v}{\veccvT A^{-\frac{1}{2}} A^{-\frac{1}{2}}\veccv}\leq\alpha\\
		&\implies\beta\leq\dfrac{\veccvT \veccv}{\veccvT A^{-1}\veccv }\leq\alpha\\
		&\implies\frac{1}{\alpha}\leq\dfrac{\veccvT A^{-1}\veccv}{ \veccvT \veccv}\leq\frac{1}{\beta}.
	\end{align*}
\end{proof}

\bibliographystyle{siam}
\bibliography{myrefdiscgreenpos}

\end{document}